\def\bC{\mathbb{C}}
\def\bP{\mathbb{P}}
\def\cL{\mathcal{L}}
\def\cT{\mathcal{T}}
\def\bL{\mathbf{L}}
\DeclareMathOperator{\Gr}{Gr}
\DeclareMathOperator{\Tev}{Tev}
\DeclareMathOperator{\SYT}{SYT}
\newtheorem{definition}{Definition}[section]
\newtheorem{theorem}[definition]{Theorem}
\newtheorem{example}[definition]{Example}
\newtheorem{proposition}[definition]{Proposition}
\newtheorem{lemma}[definition]{Lemma}
\newtheorem{remark}[definition]{Remark}
\title{Curves in projective space and RSK}
  \author{Carl Lian}
\address{Washington University in St. Louis, Department of Mathematics, 1 Brookings Drive
\hfill \newline\texttt{}
 \indent  St. Louis, MO 63130} \email{{\tt clian@wustl.edu}}
\author{Saskia Solotko}
\address{Tufts University, Department of Mathematics, 177 College Ave
\hfill \newline\texttt{} 
 \indent Medford, MA 02155} \email{{\tt Saskia.Solotko@tufts.edu}}
 \date{\today}
\begin{document}

\begin{abstract}
    The geometric Tevelev degrees of projective space enumerate general, pointed algebraic curves interpolating through the maximal possible number of points. Previous work expresses these invariants in terms of Schubert calculus. Extending ideas of Gillespie--Reimer-Berg, we use the RSK correspondence to give a positive interpretation of these counts in terms of the combinatorics of words.
\end{abstract}

\maketitle

\date{\today}

\setcounter{tocdepth}{1}

\tableofcontents
\section{Introduction}\label{sec:intro}

Let $C$ be a general smooth, projective algebraic curve of genus $g$, defined over the field of complex numbers $\bC$. Let $\bP^r$ be the $r$-dimensional complex projective space. Let $p_1,\ldots,p_n\in C$ and $x_1,\ldots,x_n\in \bP^r$ be fixed, general (hence distinct) points. Assume that
\begin{equation}\label{eq:dim_constraint}
    n=\frac{r+1}{r}\cdot d-g+1\ge r+1.
\end{equation}
Then, the \emph{geometric Tevelev degrees} $\Tev^{\bP^r}_{g,n,d}$ enumerate degree $d$ maps $f:C\to\bP^r$ satisfying $f(p_i)=x_i$ for each $i=1,\ldots,n$. That is, $\Tev^{\bP^r}_{g,n,d}$ is by definition the number of copies of a general pointed curve in $r$-dimensional space interpolating through the maximal number of points.

The purpose of this paper is to give a combinatorial proof of the following theorem.

\begin{theorem}\label{thm:main_intro}
Assume \eqref{eq:dim_constraint}. Then, $\Tev^{\bP^r}_{g,n,d}$ is equal to the number of $(r+1)$-ary words of length $g$ (Definition \ref{def:word}) which have:
\begin{enumerate}
\item a collection of at least $g+r-d$ disjoint decreasing subsequences of length $r+1$ (Definition \ref{def:decreasing})\footnote{Condition (i) is vacuous if $d\ge g+r$.},
\item no nondecreasing subsequence of length greater than $\frac{d}{r}$ (Definition \ref{def:decreasing}), and
\item no $(i,i+1)$-subsequence of length $\frac{d}{r}.$ (Definition \ref{def:(i,i+1)_subseq})
\end{enumerate}
\end{theorem}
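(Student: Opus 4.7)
The plan is to start from the Schubert calculus expression for $\Tev^{\bP^r}_{g,n,d}$ established in prior work, which presents the Tevelev degree as an intersection number on a Grassmannian $\Gr(r+1, N)$ for an appropriate $N$. Expanding this intersection via the Littlewood--Richardson rule (or, equivalently, extracting a coefficient from a Schur polynomial) yields a sum indexed by pairs of Young tableaux of common shape $\lambda \vdash g$ with at most $r+1$ rows. The first task is to identify, within this expansion, precisely which shapes $\lambda$ contribute and with what multiplicity, and to reorganize the sum into a form that visibly counts pairs of tableaux.

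The second step is to invoke the RSK correspondence, which bijects $(r+1)$-ary words of length $g$ with pairs $(P, Q)$ of tableaux of common shape $\lambda \vdash g$ with at most $r+1$ rows, where $P$ is semistandard in $\{1, \ldots, r+1\}$ and $Q$ is standard. Extending the framework of Gillespie--Reimer-Berg, I would interpret the reorganized Schubert calculus sum as a count of such pairs $(P, Q)$, and hence, via RSK, as a count of $(r+1)$-ary words subject to certain combinatorial constraints inherited from the tableau side.

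The third step is to translate those tableau constraints into conditions (i)--(iii). By Greene's theorem, the existence of $g+r-d$ disjoint decreasing subsequences of length $r+1$ forces the first $g+r-d$ columns of $\lambda$ to each have the maximal length $r+1$, and the absence of a nondecreasing subsequence of length exceeding $d/r$ forces $\lambda_1 \le d/r$. Both are pure shape conditions and should match the natural range of partitions indexing the Schubert calculus sum. Condition (iii), forbidding $(i,i+1)$-subsequences of length $d/r$, is \emph{not} a shape condition, and I expect it to correspond to a content restriction on $P$ reflecting a vanishing in the Schubert calculus expansion involving adjacent Schubert classes.

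The main obstacle will be verifying condition (iii). While (i) and (ii) follow transparently from Greene's theorem, the $(i,i+1)$-subsequence constraint requires a finer analysis of the semistandard tableau $P$, likely through an inductive study of how RSK insertion interacts with two-letter subalphabets $\{i, i+1\}$ in the spirit of Gillespie--Reimer-Berg, together with a jeu-de-taquin or crystal-theoretic argument to identify the extra constraint coming from the Schubert calculus. Carrying this step out rigorously, and matching it precisely to the algebraic vanishing on the geometric side, is where the combinatorial heart of the proof lies.
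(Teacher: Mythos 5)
Your high-level architecture matches the paper's: convert the geometric count into pairs $(P,Q)$ of a semistandard and standard tableau of common shape of size $g$, apply RSK, and read off conditions (i) and (ii) from Greene's theorem as the shape conditions ``at least $g+r-d$ columns of height $r+1$'' and ``width at most $d/r$.'' But there are two gaps. The smaller one: you assert that expanding the Schubert calculus formula ``yields a sum indexed by pairs of Young tableaux of common shape $\lambda\vdash g$,'' which is not what the expansion gives directly. The natural tableau model (Theorem \ref{carlthm}, already available from prior work) is the $L$-tableau, whose blue part has size $(d-r)(r+1)-rg$ and whose red part has content $(r^g)$; getting to size-$g$ pairs of common shape requires the complementation $\phi$ of the red tableau in an $(r+1)\times g$ rectangle together with the width adjustment $\psi$ of Definition \ref{widthadjust}. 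The genuinely new case beyond Gillespie--Reimer-Berg is $d<g+r$, where one must \emph{add} $g+r-d$ full columns of height $r+1$ rather than truncate, and it is precisely this step that produces the shape restriction giving condition (i). Your plan treats this reorganization as bookkeeping, but it is where the restricted class $\cT^r_{g,n,d}$ of Proposition \ref{prop:convert_to_pair_of_same_size} comes from.

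The decisive gap is condition (iii), which you explicitly defer as ``the main obstacle'' with only a gesture toward RSK-insertion induction, jeu de taquin, or crystals. Moreover, your guess about its source is off: it is not a vanishing to be extracted from the Schubert calculus, but a condition already present in the input — the $(i,i+1)$-strip (1-strip-less) constraint on the blue tableau in Theorem \ref{carlthm}, coming from \cite{lian_lr,tev_Pr}. Given that, the paper's argument is short and elementary: (a) the property of containing an $(i,i+1)$-subsequence of length $\ell$ is invariant under elementary Knuth transformations, since the only moves affecting the relative order of the letters $i$ and $i+1$ visibly preserve it; (b) $P$ has an $(i,i+1)$-strip of length $\lambda_1$ if and only if $\text{reading}(P)$ has an $(i,i+1)$-subsequence of length $\lambda_1$ (Lemma \ref{lemma:striptoreading}); and (c) $\omega$ is Knuth equivalent to $\text{reading}(P)$, so the same holds for $\omega$ (Lemma \ref{lemma:striptosequence}). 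Note also the subtlety your sketch misses: the strip/subsequence equivalence is only invoked at the \emph{maximal} length $\lambda_1=d/r$; when the width of $P$ is strictly less than $d/r$, the strip condition is vacuous and condition (ii) already forbids the relevant subsequences. Without an actual argument for (iii), your proposal identifies the right target but does not prove the theorem.
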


The values of $\Tev^{\bP^r}_{g,n,d}$ have previously been determined algebro-geometrically in \cite{tev_Pr} in terms of Schubert calculus, see Theorem \ref{thm:tev_Pr} below. Our main contribution is to translate the answer into that of Theorem \ref{thm:main_intro} using the RSK correspondence, building on earlier work of Gillespie--Reimer-Berg \cite{GenRSK}. In the remainder of the introduction, we review the previous results and discuss what is new in more detail.

\subsection{Geometric results}

We give an overview of the geometric problem of computing $\Tev^{\bP^r}_{g,n,d}$. The discussion is mainly for context, and does not play an essential role in the arguments appearing in the main body of the paper.

\begin{theorem}\cite[Theorem 1.1, Theorem 7.9]{tev_Pr}\label{thm:tev_Pr}
Under the assumption \eqref{eq:dim_constraint}, we have
    \begin{align*}
\Tev^{\bP^r}_{g,n,d}&=\displaystyle\int_{\Gr(r+1,d+1)}\sigma^g_{1^r}\cdot \left(\sum_{\mu\subset(n-r-2)^{r}}\sigma_{\mu}\sigma_{\overline{\mu}}\right)_{\lambda_0\le n-r-1}\\
&=\displaystyle\int_{\Gr(r+1,d+1)}\sigma^g_{1^r}\cdot \left(\sum_{\lambda\subset (n-r-1)^{r+1}} \Gamma^\lambda_{r,n,n-1}\sigma_{\lambda}\right)
\end{align*}
\end{theorem}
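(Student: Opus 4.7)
The plan is to degenerate the smooth genus $g$ curve $C$ to a rational nodal curve $C_0$ of arithmetic genus $g$, obtained from $\bP^1$ by gluing $g$ generic pairs of points $(q_j, q_j')$. Under such a degeneration, the Tevelev count is preserved because the relative moduli space of maps $\overline{M}_{g,n}(\bP^r, d) \to \overline{M}_{g,n}$ is proper and the evaluation classes $\mathrm{ev}_i^*[x_i]$ extend flatly across the central fiber. A map $f: C_0 \to \bP^r$ lifts via the normalization to a degree $d$ map $\tilde f: \bP^1 \to \bP^r$ satisfying the $g$ gluing constraints $\tilde f(q_j) = \tilde f(q_j')$ together with the $n$ original incidence conditions $\tilde f(p_i) = x_i$.

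A degree $d$ morphism $\tilde f$ is determined by an $(r+1)$-dimensional subspace $V \subset H^0(\bP^1, \mathcal{O}(d)) \cong \bC^{d+1}$ together with an identification $\bP(V^*) \cong \bP^r$, so the problem localizes on $\Gr(r+1, d+1)$. Each gluing condition $\tilde f(q_j) = \tilde f(q_j')$ says that the two evaluation functionals $\mathrm{ev}_{q_j}, \mathrm{ev}_{q_j'} \in V^*$ are proportional; this is a codimension-$r$ Schubert condition with class $\sigma_{1^r}$, contributing $\sigma_{1^r}^g$ in total. The $n$ incidence conditions $\tilde f(p_i) = x_i$, after quotienting by the $\mathrm{PGL}_{r+1}$-action on the target, produce the more elaborate factor $\sum_{\mu \subset (n-r-2)^r} \sigma_\mu \sigma_{\overline\mu}$, with the restriction $\lambda_0 \leq n-r-1$ enforcing base-point-freeness of $V$ (equivalently, that $V$ corresponds to an honest morphism rather than a linear series with base points).

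The main technical obstacle is correctly computing this incidence factor. The subtlety is that $V$ alone only gives a map to $\bP(V^*)$, and to land in $\bP^r$ one must choose an identification compatible with the $n$ prescribed images $x_i$. I would handle this by first parametrizing \emph{based} linear series (where the identification is prescribed), computing the Schubert-class contribution in that enlarged setting, and then dividing by the $\mathrm{PGL}_{r+1}$-action on the target; the appearance of the ``diagonal sum'' $\sum_\mu \sigma_\mu \sigma_{\overline\mu}$ reflects the partial-flag combinatorics of such configurations. The equivalence of the two expressions in the theorem then reduces to a combinatorial identity in the Schubert ring of $\Gr(r+1, d+1)$: expanding $\sum_\mu \sigma_\mu \sigma_{\overline\mu}$ via Littlewood--Richardson and collecting coefficients of $\sigma_\lambda$, subject to $\lambda_0 \leq n-r-1$, should reproduce the numbers $\Gamma^\lambda_{r,n,n-1}$ by their combinatorial definition --- an identity whose structure the present paper is set up to address via RSK.
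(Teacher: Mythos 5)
First, a framing point: the paper does not prove this statement at all --- Theorem \ref{thm:tev_Pr} is quoted from \cite{tev_Pr} (Theorems 1.1 and 7.9 there) and used as input; the present paper's contribution begins only after it, translating the Schubert-calculus answer into words via RSK. So there is no internal proof to compare against, and your proposal has to stand on its own as a reconstruction of the cited argument. As such it contains a genuine gap.

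The gap is the invariance step. You assert the count is preserved under degeneration to a $g$-nodal rational curve ``because the relative moduli space of maps is proper and the evaluation classes extend flatly.'' Properness gives deformation invariance of the \emph{virtual} count, and your ensuing computation --- each gluing condition imposing the Thom--Porteous class $\sigma_{1^r}$ (that part is correct), the incidence conditions imposing a fixed class --- is exactly the Vafa--Intriligator-type calculation that produces $(r+1)^g$. But the entire point of \cite{tev_Pr}, stressed in \S\ref{sec:intro} of this paper, is that $\Tev^{\bP^r}_{g,n,d}$ is an honest enumerative count which differs from the virtual one as soon as $d<rg+r$: limits of the interpolating maps can be stable maps with contracted components, or quasimaps with base points (\S\ref{sec:remarks} exhibits such degenerate contributions concretely in the case $d=rg$). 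Your sketch supplies no mechanism to exclude or correct for these loci, and the two features of the formula you would need to derive --- the truncation $\lambda_0\le n-r-1$ and the factor $\sum_{\mu\subset(n-r-2)^{r}}\sigma_{\mu}\sigma_{\overline{\mu}}$ --- are precisely where that correction lives. Attributing the truncation to ``base-point-freeness'' is the right slogan but not an argument; making it precise is where \cite{tev_Pr} brings in degenerations of complete collineations, and the incidence factor is the class of a generic torus orbit closure on $\Gr(r+1,n)$ (Klyachko, Anderson--Tymoczko, Berget--Fink), not something recoverable from the ``partial-flag combinatorics'' you gesture at. Relatedly, ``dividing by the $\mathrm{PGL}_{r+1}$-action'' is not available as stated: the action on the relevant configurations is not free on the degenerate loci, and handling this quotient is exactly the hard geometry. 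Finally, your last step is right in spirit but misattributed: the equality of the two displayed expressions is a Littlewood--Richardson computation carried out in \cite{lian_lr} (where the $1$-strip-less counts $\Gamma^{\lambda}_{r,n,n-1}$ are defined), not an identity this paper's RSK machinery is set up to prove.
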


The requirement \eqref{eq:dim_constraint} that $n\ge r+1$ ensures that the interpolating maps $f:C\to\bP^r$ in question are non-degenerate, that is, their images do not lie in any hyperplane. Brill-Noether theory implies that a general pointed curve of degree $d$ and genus $g$ can interpolate through no more than $\frac{r+1}{r}\cdot d-g+1$ general points in $\bP^r$, and that when $n$ is equal to exactly this number, the set of such interpolating maps $f$ is finite and transverse.\footnote{That is, the interpolating maps $f$ admit no infinitesimal deformations fixing the points $p_i,x_i$.}

See also \cite{tevelev,cps,fl,cl1,Troptev} for work on the special case $r=1$, and \cite{lian_hyp,cl2,cl_hirzebruch,cl_complete,ls} for work on geometric Tevelev degrees of other target varieties. As alluded to below, the question of \emph{virtually} enumerating fixed general curves on algebraic varieties goes back to the beginning of the subject of Gromov-Witten theory, though we point out that the Theorem \ref{thm:tev_Pr} gives an \emph{actual} count of curves, as opposed to a virtual intersection number.

Both formulas of Theorem \ref{thm:tev_Pr} are intersection numbers on the Grassmannian manifold $\Gr(r+1,d+1)$, and the elements $\sigma_{\lambda}$, for partitions $\lambda$, are Schubert classes lying in its cohomology ring. The integers $\Gamma^{\lambda}_{r,n,n-1}\ge0$ enumerate \emph{1-strip-less} semi-standard Young tableaux, see \cite{lian_lr} and Definition \ref{ii+1stripdef} below. The expressions
\begin{equation*}
    \sum_{\mu\subset(n-r-2)^{r}}\sigma_{\mu}\sigma_{\overline{\mu}}=\sum_{\lambda\subset (n-r-1)^{r+1}} \Gamma^\lambda_{r,n,n-1}\sigma_{\lambda}.
\end{equation*}
arise as the classes of generic torus orbit closures on the Grassmannian $\Gr(r+1,n)$, going back to work of Klyachko \cite{klyachko}, Anderson-Tymoczko \cite{at}, and Berget-Fink \cite{bf}. The two formulas are compared via the Littlewood-Richardson rule in \cite{lian_lr}. We do not review the notation for Schubert classes here, referring instead to \cite{tev_Pr}. In this paper, we will pass instead to the natural interpretation of the intersection numbers as enumerations of \emph{$L$-tableaux}, see \cite[Definition 1.1]{GenRSK} and Theorem \ref{carlthm} below.

For curves of large degree, it had been observed earlier that the geometric Tevelev degrees $\Tev^{\bP^r}_{g,n,d}$ admit simpler formulas.

\begin{theorem}\cite[Theorem 1.1, Theorem 1.2]{fl}\label{thm:large_deg}
Assume \eqref{eq:dim_constraint}, and assume further that $d\ge rg+r$ (equivalently, that $n\ge rg+r+2$ or that $n\ge d+2$). Then, 
    \begin{align*}
\Tev^{\bP^r}_{g,n,d}&=\displaystyle\int_{\Gr(r+1,d+1)}\sigma^g_{1^r}\cdot \left[\sum_{\alpha_0+\cdots+\alpha_r=(r+1)(d-r)-rg}\left(\prod_{j=0}^{r}\sigma_{\alpha_j}\right)\right]\\
&=(r+1)^g.
\end{align*}
\end{theorem}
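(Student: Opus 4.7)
The plan is to derive the two equalities by different routes: the closed form $(r+1)^g$ will follow from Theorem \ref{thm:main_intro} by checking that its three constraints become vacuous in the large-degree regime, while the Schubert-theoretic identity linking Theorem \ref{thm:large_deg} to Theorem \ref{thm:tev_Pr} will be obtained by manipulating the class of a generic torus orbit closure on the Grassmannian.

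For the closed form, I would specialize Theorem \ref{thm:main_intro} to the range $d\ge rg+r$. This hypothesis is equivalent to $d/r\ge g+1$, and since every subsequence of a word of length $g$ has length at most $g<d/r$, neither a nondecreasing subsequence nor an $(i,i+1)$-subsequence can reach the prohibited lengths; conditions (ii) and (iii) are thus vacuous. For condition (i), the hypothesis gives $d\ge rg+r\ge g+r$ whenever $r\ge 1$, so the footnote applies. Hence every $(r+1)$-ary word of length $g$ is counted, yielding $(r+1)^g$.

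For the first equality, the task is to identify
\begin{equation*}
\left(\sum_{\mu\subset(n-r-2)^{r}}\sigma_{\mu}\sigma_{\overline{\mu}}\right)_{\lambda_0\le n-r-1} = \sum_{\alpha_0+\cdots+\alpha_r=(r+1)(d-r)-rg}\prod_{j=0}^{r}\sigma_{\alpha_j}
\end{equation*}
in $H^{\ast}(\Gr(r+1,d+1))$ modulo terms annihilated by $\sigma_{1^r}^g$. I would invoke the Klyachko/Anderson--Tymoczko/Berget--Fink description of the generic torus orbit closure class, which admits both presentations simultaneously: as the $\mu$-sum on the left and as a sum of products of one-row Schubert classes on the right. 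In the large-degree regime, the truncation $\lambda_0\le n-r-1$ is innocuous because the surviving top-degree contribution of $\sigma_{1^r}^g\cdot(\,\cdot\,)$ is carried by $\sigma_\lambda$ with first part bounded by $d-r$, automatically within the cutoff. Matching the complement involution $\mu\mapsto\overline{\mu}$ with compositions $(\alpha_0,\ldots,\alpha_r)$ summing to $(r+1)(d-r)-rg$ can then be checked by iterated Pieri.

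The main obstacle will be the Schubert-calculus bookkeeping: verifying that the two generic-orbit expressions agree precisely on the part that survives multiplication by $\sigma_{1^r}^g$, and that the $\lambda_0\le n-r-1$ truncation neither drops nor introduces spurious terms once we pass from the orbit ambient space to $\Gr(r+1,d+1)$. The closed-form evaluation $(r+1)^g$, by contrast, is essentially immediate once Theorem \ref{thm:main_intro} is in hand, so the real content of the theorem from our vantage point is the intermediate Schubert identity.
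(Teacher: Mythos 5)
Your closed-form step is correct and is exactly how the paper itself views this equality: the paper never proves Theorem \ref{thm:large_deg} internally (it is imported from \cite{fl}, where both formulas are obtained by \emph{geometric} arguments), and it remarks in \S\ref{sec:intro} and \S\ref{sec:remarks} that Theorem \ref{thm:main_intro} recovers $(r+1)^g$ because all three conditions become vacuous when $d\ge rg+r$: condition (i) since $d\ge rg+r\ge g+r$, and conditions (ii), (iii) since $d/r\ge g+1$ exceeds the length of any subsequence of a length-$g$ word. There is no circularity in this, since Theorem \ref{thm:main_intro} is proved from Theorem \ref{thm:tev_Pr} (via Theorem \ref{carlthm}), not from Theorem \ref{thm:large_deg}.

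The gap is in your treatment of the first equality. You chose the truncated $\mu$-sum presentation from Theorem \ref{thm:tev_Pr} and deferred the identification with $\sum_{\alpha_0+\cdots+\alpha_r}\prod_j\sigma_{\alpha_j}$ to ``iterated Pieri'' plus bookkeeping with the complement involution $\mu\mapsto\overline{\mu}$ --- but that comparison is precisely the nontrivial Littlewood--Richardson argument carried out in \cite{lian_lr}, not a routine Pieri computation, and your framing ``modulo terms annihilated by $\sigma_{1^r}^g$'' is off: the identity the paper uses is an equality of classes, not an equality up to the kernel of multiplication. The streamlined route, which the paper indicates immediately after stating Theorem \ref{thm:large_deg}, is to use the \emph{second} formula of Theorem \ref{thm:tev_Pr}. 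When $d\ge rg+r$ one has
\begin{equation*}
(n-r-1)-(d-r)=\frac{d}{r}-g\ge 1,
\end{equation*}
so every partition $\lambda\subset(d-r)^{r+1}$ relevant on $\Gr(r+1,d+1)$ has $\lambda_1<n-r-1$; hence both the truncation and the $1$-strip-less condition are vacuous (cf. Remark \ref{rem:conditions_vacuous}(a)), so $\Gamma^\lambda_{r,n,n-1}$ is just the number of SSYT of shape $\lambda$ with entries in $\{1,\ldots,r+1\}$. Iterated Pieri expands $\sum_{\alpha_0+\cdots+\alpha_r=(r+1)(d-r)-rg}\prod_{j=0}^r\sigma_{\alpha_j}$ as $\sum_\lambda K_\lambda\,\sigma_\lambda$ with $K_\lambda$ counting exactly those SSYT, which gives the first equality directly. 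Relatedly, your justification that the truncation is ``innocuous'' should simply be that the cutoff $n-r-1$ exceeds the rectangle width $d-r$, not an appeal to surviving top-degree contributions. Finally, note that the combinatorial equality of the two displayed quantities (without reference to $\Tev$) is Gillespie--Reimer-Berg's Theorem \ref{thm:grb}, valid already for $d\ge g+r$; your proposal in effect re-derives the special case of it.
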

When $d\ge rg+r$, the equality
\begin{equation*}
\sum_{\alpha_0+\cdots+\alpha_r=(r+1)(d-r)-rg}\left(\prod_{j=0}^{r}\sigma_{\alpha_j}\right)=\sum_{\lambda\subset (n-r-1)^{r+1}} \Gamma^\lambda_{r,n,n-1}\sigma_{\lambda}
\end{equation*}
of expressions appearing in Theorems \ref{thm:tev_Pr} and \ref{thm:large_deg} follows from a straightforward application of the Pieri rule. Indeed, the 1-strip-less condition is empty in this range. The answer $(r+1)^g$ matches the corresponding \emph{virtual} enumeration of curves in Gromov-Witten theory \cite[(3)]{bp}, and in fact goes back to early work on Vafa-Intriligator formulas \cite{bdw,st,mo}. The virtual invariants are enumerative when $d\ge rg+r$, but not otherwise, see \cite{Enumvir,bllrst,celadoan} for further explorations of this phenomenon for different targets.

\subsection{Combinatorial results}

The Schubert calculus and $(r+1)^g$ formulas of Theorem \ref{thm:large_deg} are obtained in \cite{fl} by two different geometric calculations, leaving open the question of whether there is a simple combinatorial explanation for their equality. Such an explanation was provided in work of Gillespie--Reimer-Berg \cite{GenRSK}, which is the starting point for this paper.

\begin{theorem}\cite[Theorem 1.3]{GenRSK}\label{thm:grb}
    Suppose that $d\ge g+r$. Then, via the RSK correspondence,
    \begin{equation*}
\displaystyle\int_{\Gr(r+1,d+1)}\sigma^g_{1^r}\cdot \left[\sum_{\alpha_0+\cdots+\alpha_r=(r+1)(d-r)-rg}\left(\prod_{j=0}^{r}\sigma_{\alpha_j}\right)\right]=(r+1)^g.
\end{equation*}
\end{theorem}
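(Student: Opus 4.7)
The plan is to decompose each factor of the intersection number combinatorially and reduce to the classical RSK identity
\begin{equation*}
\sum_{\nu\subset g^{r+1}} f^\nu\cdot|\SSYT(\nu,[r+1])|=(r+1)^g,
\end{equation*}
which is exactly the statement that RSK gives a bijection between $(r+1)$-ary words of length $g$ and pairs $(P,Q)$ with $P\in\SSYT(\nu,[r+1])$ and $Q\in\SYT(\nu)$ for some $\nu\vdash g$ with at most $r+1$ parts.

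First, I would use iterated Pieri, summing Kostka numbers, to obtain
\begin{equation*}
\sum_{|\alpha|=(r+1)(d-r)-rg}\prod_{j=0}^r\sigma_{\alpha_j}=\sum_\lambda|\SSYT(\lambda,[r+1])|\,\sigma_\lambda,
\end{equation*}
the sum taken over $\lambda\subset(d-r)^{r+1}$ of size $(r+1)(d-r)-rg$. Next, expand $\sigma_{1^r}^g=\sum_\mu D^\mu\,\sigma_\mu$, where $D^\mu$ enumerates chains $\emptyset=\mu^{(0)}\subset\cdots\subset\mu^{(g)}=\mu$ of vertical strips of size $r$. The key combinatorial claim is $D^\mu=f^{\tilde\mu}$, where $\tilde\mu$ denotes the complement of $\mu$ in the $(r+1)\times g$ rectangle. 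My approach is to encode each chain by the word $w\in[r+1]^g$ whose $i$th letter is the row \emph{skipped} by the $i$th strip. The partition condition on each intermediate $\mu^{(i)}$ translates to the condition that in every prefix of $w$, the count of $j$ is at most the count of $j+1$. Applying the alphabet reversal $w_i\mapsto r+2-w_i$ turns this into the standard lattice-word condition, and lattice words of content $\tilde\mu$ are in bijection with $\SYT(\tilde\mu)$ via the classical rule ``place $i$ in the row indicated by the $i$th letter''. The hypothesis $d\ge g+r$ guarantees that the bounding-box constraint $\mu\subset(d-r)^{r+1}$ is automatic, since each $\mu_j\le g\le d-r$.

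To assemble, I would invoke Poincar\'e duality to write $\int_{\Gr(r+1,d+1)}\sigma_{1^r}^g\,\sigma_\lambda=D^{\lambda^c}$, where $\lambda^c$ is the complement of $\lambda$ in $(d-r)^{r+1}$. Setting $\nu(\lambda)_j:=\lambda_j-(d-r-g)$, one checks that $\nu(\lambda)$ equals the complement of $\lambda^c$ in the $(r+1)\times g$ rectangle, so the preceding step gives $D^{\lambda^c}=f^{\nu(\lambda)}$. Since $\lambda=\nu(\lambda)+\bigl((d-r-g)^{r+1}\bigr)$, the Schur identity $s_{\mu+(c^k)}(x)=(x_1\cdots x_k)^c\,s_\mu(x)$ evaluated at $x_i=1$ yields $|\SSYT(\lambda,[r+1])|=|\SSYT(\nu(\lambda),[r+1])|$. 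Combining,
\begin{equation*}
\mathrm{LHS}=\sum_\lambda f^{\nu(\lambda)}\,|\SSYT(\nu(\lambda),[r+1])|=\sum_{\nu\subset g^{r+1}}f^\nu\,|\SSYT(\nu,[r+1])|=(r+1)^g
\end{equation*}
by RSK, as desired.

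The main obstacle will be the bijection $D^\mu=f^{\tilde\mu}$: one must carefully verify that the partition condition at every intermediate step of the chain corresponds precisely to the lattice condition at every prefix of the alphabet-reversed word, and that the content of that word is $\tilde\mu$. The remaining ingredients---Pieri, Poincar\'e duality, and the Schur rectangle identity---should be routine once this bijection is in hand, and the RSK correspondence enters exactly where the statement of the theorem advertises it.
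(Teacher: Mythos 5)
Your proposal is correct and is essentially the argument of the paper (and of Gillespie--Reimer-Berg): your skipped-row encoding of vertical-strip chains is exactly the purple-tableau bijection $\phi$, your subtraction of the $\bigl((d-g-r)^{r+1}\bigr)$ rectangle via the Schur identity is exactly the width adjustment $\psi$ of Definition \ref{widthadjust}, and the final count $\sum_{\nu} f^{\nu}\,|\SSYT(\nu,[r+1])|=(r+1)^g$ is the same application of RSK to pairs $(P,Q)$ of common shape of size $g$ and height at most $r+1$. The only point worth flagging is the minor one you implicitly handle: for $\lambda$ not containing $\bigl((d-g-r)^{r+1}\bigr)$ one has $D^{\lambda^c}=0$, since each vertical strip increases the width by at most one, so such terms drop out before the shift $\lambda\mapsto\nu(\lambda)$ is applied.
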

More precisely, after suitable transformations, the intersection number
\begin{equation}\label{fl_schubert}
    \int_{\Gr(r+1,d+1)}\sigma^g_{1^r}\cdot \left[\sum_{\alpha_0+\cdots+\alpha_r=(r+1)(d-r)-rg}\left(\prod_{j=0}^{r}\sigma_{\alpha_j}\right)\right]
\end{equation}
is interpreted as the number of pairs $(P,Q)$ of a semi-standard and standard tableau of the same shape, of size $g$, and of height at most $(r+1)$. The RSK algorithm converts such pairs into $(r+1)$-ary words of length $g$, of which there are $(r+1)^g$. Note that the result of Theorem \ref{thm:grb} is valid under the assumption $d\ge r+g$, weaker than that of Theorem \ref{thm:large_deg}, but \eqref{fl_schubert} is no longer equal to $\Tev^{\bP^r}_{g,r,d}$ in the wider range.

The more general formula of Theorem \ref{thm:tev_Pr} now being available, it is natural to extend Gillespie--Reimer-Berg's RSK calculation to the full range of geometric Tevelev degrees $\Tev^{\bP^r}_{g,n,d}$. It can no longer be true that the maps enumerated by $\Tev^{\bP^r}_{g,n,d}$ are in bijection with $(r+1)$-ary words of length $g$. Indeed, the requirement that $d\ge rg+r$ for the equality $\Tev^{\bP^r}_{g,n,d}=(r+1)^g$ is essentially sharp, see e.g. \cite[Example 1.6]{tev_Pr}. 

On the other hand, it is true in general that $\Tev^{\bP^r}_{g,n,d}\le (r+1)^g$ \cite[Corollary 5.8]{tev_Pr}, so RSK could still afford an \emph{injection} into the set of $(r+1)$-ary words of length $g$ with combinatorially meaningful image. Our main result, Theorem \ref{thm:main_intro}, shows that this is the case. Theorem \ref{thm:main_intro} moreover recovers the fact that $\Tev^{\bP^r}_{g,n,d}=(r+1)^g$ when $d\ge rg+r$. Indeed, condition (i) of Theorem \ref{thm:main_intro} is vacuous if $d\ge g+r$, and conditions (ii) and (iii) are vacuous if $\frac{d}{r}>g$. Further specializations of Theorem \ref{thm:main_intro} are discussed in \S\ref{sec:remarks}.

Our proof of Theorem \ref{thm:main_intro} begins along the lines of the work of Gillespie--Reimer-Berg \cite{GenRSK}, converting $L$-tableaux enumerated by the formula of Theorem \ref{thm:tev_Pr} into pairs of tableaux $(P,Q)$ of the same shape. The previous work dealt only with the case $d\ge r+g$; a modification is needed in the range $d<r+g$.\footnote{$d\ge r+g$ and $d<r+g$ are the ``Brill-Noether-general'' and ``Brill-Noether-special'' ranges, respectively. That is, if $C$ is a general curve of genus $g$, then every line bundle $\cL$ of degree $d$ has $\dim H^0(C,\cL)\ge r+1$ (and hence underlies a non-degenerate map $f:C\to\bP^r$, possibly after twisting base points) if and only if $d\ge r+g$. While Brill-Noether theory plays a central role in the geometric calculation of $\Tev^{\bP^r}_{g,n,d}$, we do not have a satisfying explanation for the appearance of this dichotomy in the tableaux combinatorics.}

The new phenomenon is that restrictions arise on the tableaux $(P,Q)$. Theorem \ref{thm:main_intro} then amounts to determining the RSK-image of the subset of restricted pairs $(P,Q)$. Conditions (i) and (ii) of Theorem \ref{thm:main_intro} arise from Greene's Theorem (\cite[Theorem 4.8.10]{sagan} and \cite{greene}), relating the shapes of $P,Q$ to increasing and decreasing subsequences of the RSK-image. Condition (iii) arises from the 1-strip-less condition of \cite{lian_lr} on the semi-standard tableau $P$ and an analysis of Knuth equivalence.

We recall Gillespie--Reimer-Berg's L-tableaux and the enumeration of the geometric Tevelev degrees $\Tev^{\bP^r}_{g,n,d}$ in terms of L-tableaux in \S\ref{sec:prelim}. In \S\ref{sec:tableaux_operations}, we convert the L-tableaux into pairs of tableaux of the same shape, extending the ideas of \cite{GenRSK}. We compute the corresponding $(r+1)$-ary words under RSK in \S\ref{sec:rsk}, thereby proving Theorem \ref{thm:main_intro}.

We assume some familiarity with Young tableaux and the RSK correspondence, see for instance \cite[\S 7]{stanley}. We write the parts of partitions $\lambda=(\lambda_1,\ldots,\lambda_\ell)$ in non-increasing order. We take the convention that Young diagrams are top- and left-justified. A filling of the Young diagram of $\lambda$ is \emph{semi-standard} if entries are strictly increasing down columns from top to bottom, and weakly increasing across rows from left to right.

\subsection{Acknowledgments}

Both authors were supported by the Summer Scholars research program at Tufts University.  C.L. has been supported by NSF Postdoctoral Fellowship DMS-2001976, the MATH+ incubator grant “Tevelev degrees,” and an AMS-Simons travel grant. We also thank Maria Gillespie and Andrew Reimer-Berg for explaining their results to us.

\section{Preliminaries}\label{sec:prelim}

We begin by defining the fillings of rectangles introduced in \cite[Propositon 4.1]{fl}, called \emph{L-tableaux} in \cite{GenRSK}. 

\begin{definition}\label{Ltableaux}
An \emph{$L$-tableau} $\bL$ with parameters $(g,r,d)$ is a filling of boxes of an $(r+1)\times (d-r)$ grid with
\begin{itemize}
    \item $(d-r)(r+1)-rg$ \color{blue} blue \color{black} integers among $\textcolor{blue}{1,2,\ldots, r+1}$, with each appearing any number of times. The blue integers are top- and left- justified, strictly increasing down columns and weakly increasing across rows from left to right. The blue integers therefore form a semi-standard Young tableau (SSYT), which we call the \emph{blue tableau} $B$.

    \item $rg$ \color{red}red \color{black} integers among $\textcolor{red}{1,2,\ldots, g}$, with each appearing exactly $r$ times. The red integers are bottom- and right- justified, weakly decreasing down columns and strictly increasing across rows from right to left. The red integers therefore form a $180^\circ$-rotated and conjugated SSYT which we call the \emph{red tableau} $R$.
\end{itemize}
\end{definition}
Here, we take the conventions of \cite{tev_Pr}. Our $L$-tableaux become the fillings of \cite[Proposition 4.1]{fl} after rotating $180^\circ$ and replacing each blue integer $\textcolor{blue}{i}$ with $\textcolor{blue}{r-i}$. Our $L$-tableaux become those of \cite{GenRSK} after reflecting over the vertical axis and replacing each blue integer $\textcolor{blue}{i}$ with $\textcolor{blue}{r+1-i}$.

\begin{example}\label{eg:L-tab} An $L$-tableau with parameters $(g,r,d)=(11,2,12).$ The blue integers (top- and left- justified) are boldfaced.
\begin{center}
\vspace{.55cm}
\begin{ytableau}

 \textcolor{blue}{\textbf{1}}& \textcolor{blue}{\textbf{1}} & \textcolor{blue}{\textbf{1}} & \textcolor{blue}{\textbf{2}} & \textcolor{blue}{\textbf{3}} & \textcolor{red}{8}& \textcolor{red}{6}& \textcolor{red}{5}& \textcolor{red}{3}& \textcolor{red}{2}\\

 \textcolor{blue}{\textbf{2}}& \textcolor{blue}{\textbf{2}} & \textcolor{red}{11} & \textcolor{red}{10}&\textcolor{red}{9}& \textcolor{red}{7}& \textcolor{red}{6}& \textcolor{red}{4}& \textcolor{red}{3}& \textcolor{red}{1}\\

 \textcolor{blue}{\textbf{3}}& \textcolor{red}{11} & \textcolor{red}{10} & \textcolor{red}{9}&\textcolor{red}{8}& \textcolor{red}{7}& \textcolor{red}{5}& \textcolor{red}{4}& \textcolor{red}{2}& \textcolor{red}{1}

\end{ytableau}
\end{center}

\label{fig:itsanltableau}
\vspace{.3cm}
\end{example}

In \cite{tev_Pr}, additional constraints are imposed on the blue tableaux involving \emph{strips}, which we now define:

\begin{definition}\label{stripdef}
A \emph{strip} $S$ with length $\ell$ of a Young diagram $\lambda$ is a collection of $\ell$ boxes in the Young diagram with the property that:
\begin{itemize}
\item Exactly one box of $S$ lies in each of the first $\ell$ columns of $\lambda,$ and
\item Given any distinct boxes $b_1,b_2$ of $S,$ if $b_1$ lies in a column to the left of $b_2,$ then $b_1$ does not also lie in a row above $b_2.$

\end{itemize} 
\end{definition}

\begin{example}\label{eg:strip} A Young diagram of shape $\lambda=(7,4,4,3)$ with a strip (highlighted green) of length $\ell=6.$

\begin{center}
\vspace{.3cm}
\begin{ytableau}
\text{ } & & & &*(green) & *(green)&  \\
& *(green)& *(green)&*(green)  \\
*(green)& & &  \\
& &  \\
\end{ytableau}
\vspace{.3cm}
\end{center}
\end{example}



\begin{definition}\label{ii+1stripdef}

For $i\in \{1,2,\cdots, r\},$ an \emph{$(i,i+1)$-strip} of a SSYT $B$ is a strip of the underlying Young diagram $\lambda$, all of whose boxes are filled with the entry $i$ or $i+1,$ and for which all instances of $i$ all appear to the left of all instances of $i+1.$

\end{definition}

In \cite{lian_lr,tev_Pr}, an $(i,i+1)$-strip for some (unspecified) value of $i$ is called a \emph{1-strip}. An $(i,i+1)$-strip is allowed to be filled entirely with the entry $i$ (resp. $i+1$), in which case it is also an $(i-1,i)$-strip (resp. $(i,i+1)$-strip), as long as $i>1$ (resp. $i<r$).


We now recall the interpretation of the geometric Tevelev degrees $\Tev^{\bP^r}_{g,n,d}$ of $\bP^r$ in terms of $L$-tableaux. We adopt the setup of \S\ref{sec:intro}; in particular, we write 
\begin{equation*}
    n=\frac{r+1}{r}\cdot d-g+1,
\end{equation*}
which is assumed throughout to be an integer. The number of blue integers in an $L$-tableaux with parameters $(g,r,d)$ is equal to
\begin{equation*}
    (d-r)(r+1)-rg=r(n-r-2).
\end{equation*}

\begin{theorem}{\cite[Theorem 7.9]{tev_Pr}}
\label{carlthm} The count $\Tev^{\bP^r}_{g,n,d}$ is equal to the number of $L$-tableaux $\bL$ with parameters $(g,r,d)$ satisfying the following additional constraints:

\begin{itemize}
    \item The blue tableau $B$ is supported in the leftmost $n-r-1=\frac{r+1}{r}\cdot d-g-r$ columns of $\bL$.
    \item No $(i,i+1)$-strip of length $n-r-1$ appears in $B$, for any value $i\in \{1,2,\cdots, r\}.$
\end{itemize}
\end{theorem}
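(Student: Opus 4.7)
The plan is to unpack the second Schubert-calculus formula from Theorem \ref{thm:tev_Pr} into a combinatorial count using two well-known interpretations of its factors. Writing
$$\Tev^{\bP^r}_{g,n,d}=\sum_{\lambda\subset (n-r-1)^{r+1}}\Gamma^\lambda_{r,n,n-1}\cdot\int_{\Gr(r+1,d+1)}\sigma^g_{1^r}\cdot\sigma_\lambda,$$
and using that $\dim\Gr(r+1,d+1)=(r+1)(d-r)$, each intersection number extracts the coefficient of the top class $\sigma_{(d-r)^{r+1}}$; by iterated application of the column Pieri rule, this coefficient equals the number of chains $\lambda=\lambda^{(0)}\subset\lambda^{(1)}\subset\cdots\subset\lambda^{(g)}=(d-r)^{r+1}$ in which each skew shape $\lambda^{(i)}/\lambda^{(i-1)}$ is a vertical $r$-strip.

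First I would identify such chains with the red tableaux of Definition \ref{Ltableaux}. Labeling the boxes of the $i$-th vertical strip with the entry $i$ yields a filling of the skew shape $(d-r)^{r+1}/\lambda$ whose content is $(r^g)$. Because a vertical strip has at most one box per row, labels in each row are pairwise distinct and appear in strictly increasing order from left to right; because multiple boxes of a single strip can occupy the same column, labels in each column are weakly increasing from top to bottom. Applying a $180^\circ$ rotation produces a bottom- and right-justified filling that is weakly decreasing down columns and strictly increasing across rows from right to left, matching the red tableau conventions of Definition \ref{Ltableaux} exactly.

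Next I would invoke the definition of $\Gamma^\lambda_{r,n,n-1}$ from \cite{lian_lr}, which counts SSYTs $B$ of shape $\lambda$ with entries in $\{1,\ldots,r+1\}$ that contain no $(i,i+1)$-strip of length $n-r-1$; these are precisely the blue tableaux satisfying both constraints of the theorem, since the summation range $\lambda\subset(n-r-1)^{r+1}$ is equivalent to $B$ being supported in the leftmost $n-r-1$ columns of the ambient $(r+1)\times(d-r)$ rectangle. Assembling, each summand of the formula counts pairs $(B,R)$ consisting of a constrained blue tableau of shape $\lambda$ and a red tableau filling the complementary shape $(d-r)^{r+1}/\lambda$, which is exactly an $L$-tableau meeting the stated constraints.

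The step requiring the most care is the identification of iterated vertical $r$-strip chains with red tableaux, including verifying that the monotonicity and justification conventions of Definition \ref{Ltableaux} align with the Pieri output after the $180^\circ$ rotation. Once this identification is secured, the rest is bookkeeping, and no spurious conditions appear because the shape restriction and the 1-strip-less condition are already built into the summation range and the definition of $\Gamma^\lambda_{r,n,n-1}$.
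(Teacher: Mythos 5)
The paper does not actually prove this statement: Theorem \ref{carlthm} is imported verbatim from \cite[Theorem 7.9]{tev_Pr}, so there is no internal argument to compare against. Your proposal reconstructs the natural derivation from the second formula of Theorem \ref{thm:tev_Pr}, and in outline it is sound: degree considerations single out the summands with $|\lambda|=(r+1)(d-r)-rg$; iterated dual Pieri for $\sigma_{1^r}$ converts $\int_{\Gr(r+1,d+1)}\sigma_{1^r}^g\sigma_\lambda$ into a count of chains $\lambda=\lambda^{(0)}\subset\cdots\subset\lambda^{(g)}=(d-r)^{r+1}$ by vertical $r$-strips; the coefficient $\Gamma^\lambda_{r,n,n-1}$ supplies exactly the $1$-strip-less blue tableaux of shape $\lambda$, with the summation range $\lambda\subset(n-r-1)^{r+1}$ encoding the support condition (and terms with $\lambda\not\subset(d-r)^{r+1}$ contributing zero). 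This is, as far as one can tell from the citation, essentially the route taken in \cite{tev_Pr}.

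One detail is wrong, and it is precisely the step you flagged as delicate. The chain filling, with the boxes of $\lambda^{(i)}/\lambda^{(i-1)}$ labeled $i$, already occupies the bottom- and right-justified complement $(d-r)^{r+1}/\lambda$ inside the rectangle, so no rotation is needed --- and rotation does not repair the conventions. A $180^\circ$ rotation carries this region to a \emph{top-left-justified} straight shape (where the blue tableau lives), so your claim that rotating produces a bottom- and right-justified filling is false: rotation fixes the monotonicity (rows strictly increasing right to left, columns weakly decreasing downward) but breaks the justification. The chain labels as you defined them increase strictly left to right along rows and weakly down columns, which is opposite to the red conventions of Definition \ref{Ltableaux}. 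The correct dictionary keeps the filling in place and applies the order-reversing relabeling $i\mapsto g+1-i$ (equivalently, labels the strips reading the chain from the rectangle down to $\lambda$), which corrects both monotonicity conditions simultaneously; one should also verify the converse, that every red tableau arises from a chain, which follows by checking that for each $t$ the set $\lambda\cup\{\text{boxes with relabeled entry}\le t\}$ is a Young diagram and that level sets are vertical strips. Since relabeling is a bijection, the enumerative conclusion is unaffected and your proof goes through after this repair, but the rotation sentence as written is incorrect and should be replaced by the label reversal.
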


\begin{definition}\label{def:calL}
    Let $\cL_{g,n,d}^r$ be the set of $L$-tableaux $\bL$ with parameters $(g,r,d)$ satisfying the additional constraints of Theorem \ref{carlthm}.
\end{definition}

The $L$-tableau appearing in Example \ref{eg:L-tab} lies in $\cL_{11,8,12}^2$.

\begin{remark}
We have only required in \eqref{eq:dim_constraint} that $n\ge r+1$, whereas the number of blue integers required to be in $\bL$ is only non-negative if $n\ge r+2$. Accordingly, if $n=r+1$, then $\Tev^{\bP^r}_{g,n,d}=0$. Indeed, in this case, the Brill-Noether theorem implies that no non-degenerate maps $f:C\to\bP^r$ of degree $d$ exist.
\end{remark}

\begin{remark}\label{rem:conditions_vacuous}
The additional constraints on $\bL$ imposed in Theorem \ref{carlthm} are vacuous in the following three cases (cf. \cite[Theorem 1.3]{fl}):
\begin{enumerate}
    \item[(a)] $d\ge rg+r$, in which case $n-r-1$ exceeds the width $d-r$ of $\bL$.
    \item[(b)] $d=r+\frac{rg}{r+1}$ (equivalently, $n=r+2$, the smallest possible), in which case $B$ is empty.
    \item[(c)] $r=1$, in which case $n-r-1$ exceeds the size $r(n-r-2)$ of $B$.
\end{enumerate}
\end{remark}

\begin{example}
    When $g=0$, an $L$-tableau consists only of blue entries. In this case, there is a unique possible semi-standard filling $\bL$, which lies in $\cL^{r}_{0,n,d}$ because $n-r-1$ exceeds the width of $\bL$. Accordingly, we have $\Tev^{\bP^r}_{0,n,d}=1$. It will be convenient in some of our arguments later to assume that $g>0$.
\end{example}

\section{Operations on the red and blue tableaux}\label{sec:tableaux_operations}

In order to be able to apply the RSK correspondence, we next explain how the $L$-tableaux $\bL$ of Theorem \ref{carlthm} may be converted to a pair of tableaux $(P,Q)$ of the same shape, extending ideas of \cite{GenRSK}.

The following is noted in the proof of \cite[Lemma~3.4]{GenRSK}.

\begin{lemma}
\label{bwalemma}
Let $\bL$ be an $L$-tableau with parameters $(g,r,d)$ satisfying $d>g+r.$ Then, the leftmost $d-g-r$ columns of $\bL$ contain only blue integers.
\end{lemma}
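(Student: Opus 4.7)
The plan is to show that no red entry of $\bL$ can lie in any of the leftmost $d-g-r$ columns of the $(r+1)\times (d-r)$ grid. Since every box of an $L$-tableau is occupied by either a blue or a red entry, this immediately forces those columns to be filled entirely with blue integers, as claimed. So the real content of the lemma is a bound on the horizontal extent of the red region $\rho\subset(r+1)\times(d-r)$.

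The first step is to pin down the shape of $\rho$. By Definition \ref{Ltableaux}, $R$ is a $180^\circ$-rotated and conjugated SSYT, so there is an honest Young diagram $\tilde{\rho}$ (the shape of the corresponding SSYT $\tilde R$) whose $180^\circ$-rotation-and-transpose is $\rho$. Translating through this identification, the bottom-right-justified region $\rho$ has row widths that weakly increase from top to bottom, with the bottom row being the widest; equivalently, each higher row of $\rho$ is a right-justified subinterval of the row below it.

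The second step is to bound the width of the bottom row of $\rho$. By the definition of $R$, the entries in any row are strictly increasing from right to left and belong to $\{1,2,\ldots,g\}$. Hence every row contains at most $g$ red integers. Combined with the previous step, the bottom row, which is the widest, contains at most $g$ entries, so $\rho$ spans at most $g$ columns.

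Since $\rho$ is right-justified in a grid of width $d-r$ and occupies at most $g$ columns, it is contained in the rightmost $g$ columns, and the leftmost $(d-r)-g=d-g-r$ columns of $\bL$ are disjoint from $\rho$. Under the hypothesis $d>g+r$ this count is positive, so the statement is nonvacuous and the claim follows. The only subtle point in the argument is correctly identifying the shape of $\rho$ from the conventions in Definition \ref{Ltableaux}; once one knows the bottom row is the widest, the bound on the width of the red region is automatic from the fact that there are only $g$ distinct values available.
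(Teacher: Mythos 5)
Your proof is correct and takes essentially the same route as the paper's: red entries strictly increase across each row from right to left and take values in $\{1,\ldots,g\}$, so each row contains at most $g$ red integers, and right-justification then confines the red region to the rightmost $g$ columns, leaving the leftmost $(d-r)-g=d-g-r$ columns entirely blue. Your intermediate step showing the bottom row of the red region is the widest is harmless but unnecessary, since each row is right-justified individually and the bound of $g$ entries applies row by row (incidentally, the paper's own one-paragraph proof contains a left/right slip, asserting red integers lie in the \emph{leftmost} $g$ columns; your version states the directions correctly).
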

\begin{proof}
As the red tableau of $\bL$ contains only integers among $1,2,\ldots,g$, and red entries strictly increase across rows from right to left, red integers may only appear in the rightmost $g$ columns of the $L$-tableau. As such, the leftmost $d-g-r$ columns contain only blue integers.
\end{proof}

\begin{definition}\label{cdef}
    Let $\text{SSYT}_\mathcal{C}(g,r,d)$ denote the set of SSYT's with:
    \begin{itemize}
    \item  size $(d-r)(r+1)-rg$,
    \item  entries among $\{1,2,\cdots, r+1\}$,
    \item  width at most $n-r-1=\frac{r+1}{r}d-g-r$,
    \item  at least $d-g-r$ columns of height $(r+1)$, and
    \item  no $(i,i+1)$-strip of length $n-r-1$ for any value $i\in \{1,2,\cdots, r\}.$
    \end{itemize}
If $d\le g+r$, then the fourth condition is empty. The subscript $\mathcal{C}$ refers to the set of constraints above.
\end{definition}

By Lemma \ref{bwalemma}, $\text{SSYT}_\mathcal{C}(g,r,d)$ is the set of all possible blue tableaux underlying $L$-tableaux satisfying the additional constraints of Theorem \ref{carlthm}. Our first step is to either truncate height $(r+1)$ columns from the blue tableau or add height $(r+1)$ columns, resembling the truncation in \cite[Lemma 3.4]{GenRSK}.

\begin{definition}\label{widthadjust}
    For $B\in\text{SSYT}_{\mathcal{C}}(g,r,d),$ we define the SSYT $\psi(B)$ to be the result of the following \emph{width adjustment}:
\begin{itemize}
\item If $d<g+r,$ then add $g+r-d$ columns of height $(r+1)$ to the left of $B$.
\item If $d>g+r,$ then remove the leftmost $d-g-r$ columns of $B$, necessarily of height $(r+1)$.
\item If $d=g+r,$ then $B$ is unchanged.
\end{itemize}

\end{definition}

By construction, the size of $\psi(B)$ is
\begin{equation*}
    (d-r)(r+1)-rg+(g+r-d)(r+1)=g
\end{equation*}
in all three cases.


\begin{definition}\label{ACdef}
    Let $\text{SSYT}_{\mathcal{AC}}(g,r,d)$ denote the set of $180^\circ$-rotated SSYT's with 
    \begin{itemize}
    \item  size $g$, 
    \item  entries among $\{1,2,\cdots, r+1\}$,
    \item  width at most $(n-r-1)+(g-d+r)=\frac{r+1}{r}d-g-r+(g-d+r)=\frac{d}{r}$,
    \item  at least $g+r-d$ columns of height $(r+1)$,
    \item  No $(i,i+1)$-strip of length $(n-r-1)+(g-d+r)=\frac{d}{r}$ for any value $i\in \{1,2,\cdots, r\}.$
    \end{itemize}
If $d\ge g+r$, then the fourth condition is empty. (Note the change in sign from Definition \ref{cdef}.) The subscript $\mathcal{AC}$ refers to the set of ``adjusted'' constraints above.
\end{definition}

\begin{lemma}\label{psibij}
The map $\psi$ is a bijection from $\text{SSYT}_\mathcal C(g,r,d)$ to $\text{SSYT}_{\mathcal AC}(g,r,d)$ for all $g,r,d.$ 
\end{lemma}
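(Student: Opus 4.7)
The plan is to exhibit a two-sided inverse to $\psi$ and verify that each defining condition of $\text{SSYT}_{\mathcal{C}}(g,r,d)$ corresponds to one of $\text{SSYT}_{\mathcal{AC}}(g,r,d)$. Define $\psi^{-1}$ by reversing the operation: if $d>g+r$, prepend $d-g-r$ height-$(r+1)$ columns (entries $1,2,\ldots,r+1$ top-to-bottom); if $d<g+r$, remove the $g+r-d$ leftmost columns (which the ``at least $g+r-d$ full columns'' condition in $\mathcal{AC}$ guarantees are full, and hence are necessarily leftmost by the SSYT property); if $d=g+r$, do nothing. Prepending full columns preserves semi-standardness because the entry in row $j$ of any column of an SSYT with entries in $\{1,\ldots,r+1\}$ is $\ge j$, so the new leftmost column's row-$j$ entry $j$ is $\le$ the row-$j$ entry of its right neighbor. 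Thus $\psi$ and $\psi^{-1}$ are well-defined operations on SSYT's and compose to the identity.

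The size, entries, width, and column-height-count conditions then match by direct substitution using $n = \frac{r+1}{r}d - g + 1$: adjusting by $|g+r-d|$ full columns changes the size by $(r+1)|g+r-d|$, transforming $(d-r)(r+1)-rg$ into $g$, and changes the width by $|g+r-d|$, transforming $n-r-1$ into $\frac{d}{r}$. The ``at least'' clauses on each side simply count, respectively, the columns removed and prepended.

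The substantive content lies in the $(i,i+1)$-strip conditions, where the forbidden length changes from $n-r-1$ in $\mathcal{C}$ to $\frac{d}{r}$ in $\mathcal{AC}$, a difference equal to the number of adjusted columns. The key observation is that a full height-$(r+1)$ column contains the entry $i$ in row $i$ and $i+1$ in row $i+1$, either of which may be incorporated into an $(i,i+1)$-strip. For $d>g+r$, an $(i,i+1)$-strip of length $n-r-1$ in $B$ restricts to one of length $\frac{d}{r}$ in $\psi(B)$ by deleting its portion in the first $k := d-g-r$ columns; conversely, an $(i,i+1)$-strip $S'$ of length $\frac{d}{r}$ in $\psi(B)$ extends back to $B$ by prepending $k$ boxes chosen from the removed full columns. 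This extension step is the main obstacle, as it requires a small case analysis on the leading entry of $S'$: if $S'$ starts with an $i$ in some row $j\le i$, we prepend row-$i$ copies of the entry $i$; if $S'$ starts with an $i+1$ in row $i+1$, we must instead prepend row-$(i+1)$ copies of the entry $i+1$. In either case the weak monotonicity of row indices (non-increasing as we move right, by Definition~\ref{stripdef}) and the $i$-before-$i+1$ ordering transfer straightforwardly, establishing the correspondence between forbidden strips. The case $d<g+r$ is entirely symmetric, with the roles of restriction and extension exchanged.
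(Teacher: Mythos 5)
Your proof is correct and takes essentially the same approach as the paper: exhibit the inverse by prepending or removing full height-$(r+1)$ columns and check that each condition of Definition \ref{cdef} translates into its counterpart in Definition \ref{ACdef}. Your explicit extension/restriction argument for the $(i,i+1)$-strip condition (including the case split on whether the strip begins with $i$ or $i+1$) supplies detail that the paper compresses into the phrase ``direct translations,'' and is a valid elaboration rather than a different method.
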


\begin{proof}
    Fix a tableau $B\in \text{SSYT}_\mathcal C(g,r,d).$ Then, $\psi(B)\in \text{SSYT}_{\mathcal AC}(g,r,d)$. In particular, the fourth condition of Definition \ref{ACdef} is empty in the case $d\ge g+r$, and is automatically satisfied by $\psi(B)$ in the case $d<g+r$. The remaining conditions in Definition \ref{ACdef} are direct translations of their counterparts in Definition \ref{cdef}.

    Moreover, an inverse to $\psi$ is given as follows: if $d\ge g+r$, then add $d-g-r$ columns of height $(r+1)$ to a tableau $P\in \text{SSYT}_{\mathcal AC}(g,r,d),$ and if $d<g+r$, then remove $g+r-d$ columns of height $(r+1)$ to $P$, which is possible by the fourth condition of Definition \ref{ACdef}.
\end{proof}

\begin{example}\label{eg:add_column}
At left, a tableau $ B \in \text{SSYT}_\mathcal{C}(11,2,12) $, in particular the blue tableau of the $L$-tableau from Example \ref{fig:itsanltableau}. At right, the tableau $\psi(B) \in \text{SSYT}_\mathcal{AC}(11,2,12) $ formed by adding one additional height $3$ column to $B.$

\vspace{.55cm}
    \centering
    \label{fig:bla}
\begin{ytableau}
\textcolor{blue}{1}& \textcolor{blue}{1} & \textcolor{blue}{1} & \textcolor{blue}{2} & \textcolor{blue}{3}\\
 \textcolor{blue}{2}& \textcolor{blue}{2}\\
 \textcolor{blue}{3}
\end{ytableau}\hspace{2cm}
\begin{ytableau}
\textcolor{blue}{1}&\textcolor{blue}{1}& \textcolor{blue}{1} & \textcolor{blue}{1} & \textcolor{blue}{2} & \textcolor{blue}{3}\\
 \textcolor{blue}{2}& \textcolor{blue}{2}& \textcolor{blue}{2}\\
 \textcolor{blue}{3}& \textcolor{blue}{3}
\end{ytableau}

\vspace{.3cm}
\end{example}

\begin{definition}
Let $\text{TrSSYT}^{180^\circ}(g,r)$ be the set of all $180^\circ$-rotated and transposed SSYT's $R$ of content $(r^g)=(r,r,\ldots,r),$ and height at most $r+1$. 
\end{definition}

If $g>0$, then the number of non-empty rows of $R$ is either $r$ or $r+1$. We label the rows of $R$ by the integers $1,\ldots,r+1$ in such a way that the bottom row is non-empty and labelled $r+1$. Thus, if $g>0$, then rows $2,\ldots,r+1$ are necessarily non-empty, whereas row 1 may be empty. If $g=0$, then all rows $1,\ldots,r+1$ are taken to be empty.

\begin{definition}
Let $\text{SYT}(g,r)$ be the set of all SYT of size $g$ and height at most $r+1.$
\end{definition}

\begin{definition}\cite[Definition 3.9]{GenRSK}
    Given $R\in \text{SSYT}^{180^\circ}(g,r),$ define the SYT $\phi(R)$ with row set $1,\ldots,r+1$ as follows. For $i=1,\ldots,g$, on the $i$-th step, place a box labeled $i$ as far to the left as possible in the unique row among $1,\ldots,r+1$ that does not contain an $i$ in $R$. We call $\phi(R)\in \text{SYT}(g,r)$ the \emph{purple tableau} associated to $R$.
\end{definition}

By construction, $R$ and $\phi(R)$ are complementary in an $(r+1)\times g$ rectangle.

\begin{lemma}[\cite{GenRSK}, Lemma 3.11]\label{phibij}
The map $\phi$ is a bijection from $\text{SSYT}^{180^\circ}(g,r)$ to $\text{SYT}(g,r)$ for all $g,r.$ Moreover, for any $R\in \text{TrSSYT}(g,r),$ the shapes of $R$ and $\phi(R)$ are complementary in a $(r+1)\times g$ rectangle.
\end{lemma}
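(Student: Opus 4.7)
The plan is to establish three things in turn: (a) $\phi$ is well-defined, (b) the shapes of $R$ and $\phi(R)$ are complementary in the $(r+1)\times g$ rectangle with $\phi(R)$ a valid SYT, and (c) $\phi$ admits an inverse.

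Well-definedness is immediate: since $R$ has content $(r^g)$ with at most $r+1$ rows, each $i\in\{1,\ldots,g\}$ appears in exactly $r$ rows and is therefore missing from exactly one, which determines the row in which $\phi$ places the box labeled $i$. For the shape statement, let $\lambda_j$ denote the length of row $j$ of $R$. Row $j$ of $R$ contains $\lambda_j$ distinct labels from $\{1,\ldots,g\}$, so the remaining $g-\lambda_j$ labels are placed into row $j$ of $\phi(R)$ by the construction. Hence row $j$ of $\phi(R)$ has length $g-\lambda_j$, giving complementarity within the $(r+1)\times g$ rectangle. Since the row lengths of $R$ in its $180^\circ$-rotated orientation are weakly increasing from top to bottom, the row lengths of $\phi(R)$ are weakly decreasing from top to bottom, confirming that $\phi(R)$ has the shape of a valid Young diagram.

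Row-strictness of $\phi(R)$ is immediate from the construction: labels are inserted in the order $1,2,\ldots,g$ and placed as far left as possible. The key step is column-strictness of $\phi(R)$, which I would prove by induction on $t=1,\ldots,g$: at every stage, the number of labels from $\{1,\ldots,t\}$ placed so far into row $k+1$ of $\phi(R)$ is at most the number placed into row $k$. Equivalently, in $R$ itself, the labels in $\{1,\ldots,t\}$ appear at least as often in row $k$ as in row $k+1$. This inequality is a direct translation of the column-strictness of the underlying SSYT (after undoing the $180^\circ$-rotation and conjugation) applied to the content $(r^g)$, and is where the $\text{TrSSYT}^{180^\circ}$ hypothesis on $R$ enters essentially.

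For bijectivity, I would define the inverse explicitly: given $P\in\text{SYT}(g,r)$, let $\text{row}_P(i)$ denote the row of $P$ containing $i$; construct $R$ as the filling of the complementary shape in the $(r+1)\times g$ rectangle obtained by inserting, for each $i=1,\ldots,g$, the label $i$ into every row other than $\text{row}_P(i)$, with the positions within each row dictated by the $\text{TrSSYT}^{180^\circ}$ ordering. The resulting $R$ has content $(r^g)$ and shape complementary to $P$; the main obstacle is to verify that the column ordering of $R$ indeed satisfies the $\text{TrSSYT}^{180^\circ}$ constraints, which reduces to the same interleaving inequality used for column-strictness above. Thus one combinatorial lemma handles both the forward column-strictness of $\phi(R)$ and the reverse verification for $R$, completing the proof.
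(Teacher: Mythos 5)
The paper does not prove this lemma itself: it is imported verbatim from Gillespie--Reimer-Berg \cite{GenRSK} (their Lemma 3.11), so there is no internal proof to compare against. Judged on its own merits, your argument is correct and is essentially the natural (and, as far as the structure goes, the standard) proof: well-definedness from the content $(r^g)$ forcing exactly one missing row per label; complementarity from the row count $g-\lambda_j$, with the partition property of the complement following from the row lengths of $R$ increasing downward; column-strictness of $\phi(R)$ from the interleaving inequality that labels in $\{1,\ldots,t\}$ occur weakly more often in lower rows of $R$; and the inverse built from the missing-row data, with the within-row order of $R$ forced.

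Two points deserve to be made explicit rather than asserted. First, the interleaving inequality for $R$: after rotating $180^\circ$, the rows of $R$ become left-justified with entries strictly increasing and columns weakly increasing downward, so every box of a lower row has a box above it with a weakly smaller entry; counting entries $\le t$ row by row gives the inequality in one line. Second, in the inverse direction you need the converse translation: for row-strict fillings whose rows are sets sorted in the forced order, the column condition of $\text{TrSSYT}^{180^\circ}$ is equivalent to the entrywise comparison of sorted rows ($a_m \le b_m$ for all $m$), which in turn is equivalent to the counting inequality $\#\{b_i \le t\} \le \#\{a_i \le t\}$ for all $t$; that inequality then follows from column-strictness of the SYT $P$ by the same one-line argument. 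With those two sentences added, your claim that a single combinatorial lemma handles both directions is fully justified, and the mutual inversion of $\phi$ and your candidate inverse is immediate since both maps are determined by the missing-row/containing-row data together with forced within-row orders.
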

\begin{example}\label{eg:redtopurple} At left, a tableau $R\in \text{TrSSYT}^{180^\circ}(11,2),$ in particular the red tableau of the $L$-tableau from Figure \ref{fig:itsanltableau}. At right, the purple tableau $\phi(R)\in \text{SYT}(11,2).$

    \vspace{.55cm}
\centering
\begin{ytableau}
\none & \none & \none & \none &\textcolor{red}{8} & \textcolor{red}{6} & \textcolor{red}{5} & \textcolor{red}{3} & \textcolor{red}{2} \\
\none & \textcolor{red}{11} & \textcolor{red}{10} & \textcolor{red}{9} & \textcolor{red}{7}  & \textcolor{red}{6} & \textcolor{red}{4} & \textcolor{red}{3} & \textcolor{red}{1}\\
 \textcolor{red}{11} &\textcolor{red}{10} & \textcolor{red}{9} & \textcolor{red}{8} &\textcolor{red}{7}  & \textcolor{red}{5} & \textcolor{red}{4} & \textcolor{red}{2} & \textcolor{red}{1}
\end{ytableau}\hspace{2cm}
\begin{ytableau}
\textcolor{purple}{1} & \textcolor{purple}{4} & \textcolor{purple}{7}  & \textcolor{purple}{9} & \textcolor{purple}{10} & \textcolor{purple}{11}\\
\textcolor{purple}{2} &\textcolor{purple}{5} & \textcolor{purple}{8} \\
\textcolor{purple}{3} &\textcolor{purple}{6} \\
\end{ytableau}

\vspace{.3cm}

\end{example}

\begin{proposition}\label{prop:convert_to_pair_of_same_size}
    Let $\cL^{r}_{g,n,d}$ be as in Definition \ref{def:calL}, and let $\cT_{g,n,d}^r$ denote the set of pairs $(P,Q)$ of the same shape with $P\in \text{SSYT}_\mathcal{AC}(g,r,d)$ and $Q$ a SYT.
    Then, the map
    \begin{equation*}
        \cL^r_{g,n,d}\to \cT^r_{g,n,d},
    \end{equation*}
   defined by $\bL\mapsto (\psi(B),\phi(R))$, is a bijection.
\end{proposition}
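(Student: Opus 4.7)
The plan is to combine the two bijections $\psi$ of Lemma \ref{psibij} and $\phi$ of Lemma \ref{phibij}. Since both are already bijections on their respective pieces, the content of the proposition reduces to two shape compatibility checks:
\emph{(i)} for any $\bL\in\cL^r_{g,n,d}$, the tableaux $\psi(B)$ and $\phi(R)$ have the same underlying shape, so the map lands in $\cT^r_{g,n,d}$; and
\emph{(ii)} for any $(P,Q)\in\cT^r_{g,n,d}$, the tableaux $B=\psi^{-1}(P)$ and $R=\phi^{-1}(Q)$ fit together as complementary shapes inside the $(r+1)\times(d-r)$ grid, yielding a valid $L$-tableau.

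For (i), let $\lambda=(\lambda_1,\ldots,\lambda_{r+1})$ denote the shape of $B$. Since $R$ occupies the complementary cells of the $(r+1)\times(d-r)$ grid, the $i$-th row of $R$ (reading top to bottom) has length $(d-r)-\lambda_i$. In each of the three cases of Definition \ref{widthadjust}, the map $\psi$ adds or removes $|g+r-d|$ complete columns of height $r+1$, so the $i$-th row of $\psi(B)$ has length $\lambda_i+(g-d+r)$, uniformly across the cases. On the other hand, by Lemma \ref{phibij}, $\phi(R)$ is shape-complementary to $R$ within the $(r+1)\times g$ rectangle, so the $i$-th row of $\phi(R)$ has length $g-((d-r)-\lambda_i)=\lambda_i+(g-d+r)$. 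The two shapes agree.

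For (ii), given $(P,Q)\in\cT^r_{g,n,d}$ of common shape $\nu$, set $B=\psi^{-1}(P)$ and $R=\phi^{-1}(Q)$. Reversing the shape computation above shows that $B$ and $R$ are complementary inside an $(r+1)\times(d-r)$ rectangle as soon as the first row of $B$ has length at most $d-r$. When $d\ge g+r$, this row has length $\nu_1+(d-g-r)\le \min(d/r,\,g)+(d-g-r)\le d-r$, using the width bound $\nu_1\le d/r$ from Definition \ref{ACdef} together with the trivial bound $\nu_1\le |\nu|=g$. When $d<g+r$, the row has length $\nu_1-(g+r-d)\le d/r\le d-r$ in the nondegenerate range $n\ge r+2$; the removal of $g+r-d$ height-$(r+1)$ columns by $\psi^{-1}$ is permitted by the fourth condition of Definition \ref{ACdef}. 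Finally, once $B$ fits in the grid, it determines a valid $L$-tableau, and $\bL\mapsto(\psi(B),\phi(R))$ and $(P,Q)\mapsto\bL$ are mutual inverses because $\psi$ and $\phi$ are.

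The main obstacle, modest though it is, is keeping careful track of the three cases of Definition \ref{widthadjust} and the sign of $g-d+r$; once these are aligned, the desired bijection follows immediately from Lemmas \ref{psibij} and \ref{phibij}.
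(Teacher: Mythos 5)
Your proposal follows essentially the same route as the paper's proof: both reduce the statement to the bijectivity of $\psi$ (Lemma \ref{psibij}) and $\phi$ (Lemma \ref{phibij}) together with a shape-compatibility check. The paper phrases that check more slickly: since $\psi$ adjusts the width of the ambient rectangle from $d-r$ to exactly $g$, both $\psi(B)$ and $\phi(R)$ are complementary to $R$ in the same $(r+1)\times g$ rectangle, hence have the same shape; your explicit row-length computation $\lambda_i+(g-d+r)$ on both sides is the same argument unpacked. Your direction (ii), checking that $\psi^{-1}(P)$ and $\phi^{-1}(Q)$ reassemble into a valid $L$-tableau, is glossed in the paper by ``the conclusion follows,'' so spelling it out is reasonable.

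There is, however, one incorrect intermediate inequality in (ii): in the case $d<g+r$ you bound the first row of $B$ by $\nu_1-(g+r-d)\le d/r\le d-r$, but $d/r\le d-r$ is equivalent to $r^2\le d(r-1)$, which fails for $r=1$ (where $d/r=d>d-1$) and hence in genuinely occurring parameter ranges, e.g.\ $(g,r,d)=(2,1,2)$. The conclusion you need is nonetheless trivially true: since $|\nu|=g$ one has $\nu_1\le g$, so $\nu_1-(g+r-d)\le d-r$ directly (and likewise $\nu_1+(d-g-r)\le d-r$ in the case $d\ge g+r$, where your appeal to $\min(d/r,g)$ also reduces to $\nu_1\le g$). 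With that one-line repair, and noting as you do that the fourth condition of Definition \ref{ACdef} (namely $\nu_{r+1}\ge g+r-d$) guarantees all rows of $B$ have non-negative length so that $B$ and $R$ are genuinely complementary in the $(r+1)\times(d-r)$ grid, your argument is complete and agrees with the paper's.
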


Note that if $(P,Q)\in \cT_{g,n,d}^r$, then it is automatic that $Q\in \SYT(g,r)$ from the requirement that $P,Q$ have the same shape.

\begin{proof}
The map given by $\bL \mapsto (B,R),$ from $\cL^{r}_{g,n,d}$ to the set of pairs $(B,R)$ complementary in an $(r+1)\times (d-r)$ rectangle with $B\in \text{SSYT}_{\mathcal C}(g,r,d)$ and $R\in \text{TrSSYT}^{180^\circ}(g,r),$ is a bijection. By Proposition \ref{psibij} and lemma \ref{phibij}, the maps
\begin{align*}
    \psi:\text{SSYT}_{\mathcal C}(g,r,d)&\rightarrow \text{SSYT}_{\mathcal AC}(g,r,d)\\
    \phi:\text{TrSSYT}^{180^\circ}(g,r)&\rightarrow \text{SYT}(g,r)
\end{align*}
are both bijections. By construction of $\psi$, the tableaux $\psi(B)$ and $R$ are complementary in an $(r+1)\times g$ rectangle. By construction of $\phi$, the tableaux $\phi(R)$ and $R$ are also complementary in an $(r+1)\times g$ rectangle, so $\psi(B)$ and $\phi(R)$ have the same shape. The conclusion follows.
\end{proof}

\begin{remark}
The association $\bL\mapsto (\psi(B),\phi(R))$ also defines a bijection, from the set of all $L$-tableaux $\bL$ as in Definition \ref{Ltableaux}, to the set of pairs $(P,Q)$ where:
\begin{itemize}
    \item $P$ is an SSYT and $Q$ is an SYT, of the same shape $\lambda$ with $|\lambda|=g$,
    \item $P$ is filled with entries among $1,\ldots,r+1$, and
    \item $\lambda$ has height at most $r+1$, and at least $g+r-d$ columns of height exactly $r+1$.
\end{itemize}
\end{remark}

\section{RSK}\label{sec:rsk}

\begin{definition}\label{def:word}
   Let $g,r$ be integers. An \emph{$(r+1)$-ary word of length $g$} is a sequence $x_1,\ldots,x_g$, where $x_1,\ldots,x_g\in\{1,\ldots,r+1\}$. 
\end{definition}

In \cite{GenRSK}, it is observed that, if $d\ge g+r$, then composing the map $\bL\mapsto (\psi(B),\phi(R))$ with the RSK correspondence gives a bijection from the \emph{entire} set of $L$-tableaux with parameters $(g,r,d)$ to the set of $(r+1)$-ary words of length $g$. If in addition $d\ge rg+r$, then $\cL^r_{g,n,d}$ is the entire set of $L$-tableaux (Remark \ref{rem:conditions_vacuous}(a)), which recovers the fact that $\Tev^{\bP^r}_{g,n,d}=(r+1)^g$ (Theorem \ref{thm:large_deg}).

With Proposition \ref{prop:convert_to_pair_of_same_size} in hand, we therefore wish to compute the image under RSK of $\cT^{r}_{g,n,d}$. The subset of $(r+1)$-ary words obtained will in turn be in bijection with curves enumerated by $\Tev^{\bP^r}_{g,n,d}$, for \emph{all} tuples $(g,r,d,n)$ satisfying \eqref{eq:dim_constraint}.

\begin{example}
Applying the RSK algorithm to the $3$-ary sequence $3,2,1,3,2,1,2,1,1,2,3$ yields the pair of tableaux $(\psi(B),\phi(R))$ (as in Examples \ref{eg:add_column} and \ref{eg:redtopurple}).

 \vspace{.55cm}
\centering
\begin{ytableau}
\textcolor{blue}{1}&\textcolor{blue}{1}& \textcolor{blue}{1} & \textcolor{blue}{1} & \textcolor{blue}{2} & \textcolor{blue}{3}\\
 \textcolor{blue}{2}& \textcolor{blue}{2}& \textcolor{blue}{2}\\
 \textcolor{blue}{3}& \textcolor{blue}{3}
\end{ytableau}\hspace{2cm}
\begin{ytableau}
\textcolor{purple}{1} & \textcolor{purple}{4} & \textcolor{purple}{7}  & \textcolor{purple}{9} & \textcolor{purple}{10} & \textcolor{purple}{11}\\
\textcolor{purple}{2} &\textcolor{purple}{5} & \textcolor{purple}{8} \\
\textcolor{purple}{3} &\textcolor{purple}{6} \\
\end{ytableau}

\vspace{.3cm}

\end{example}

\begin{definition}\label{def:decreasing}
A \emph{nondecreasing} subsequence of an $(r+1)$-ary word $x_1,\ldots, x_g$ is a word $x_{j_1},\ldots ,x_{j_m}$ with \[x_{j_1}\leq x_{j_2}\leq \dots \leq x_{j_m}\]
and \[1\le j_1<j_2<\dots <j_m\le g.\]

A \emph{decreasing} subsequence is defined similarly. We say that a collection of nondecreasing (or decreasing) subsequences of a word is \emph{disjoint} if their index sets $\{j_1,\ldots,j_m\}$ are pairwise disjoint.
\end{definition}

\begin{definition}\label{def:(i,i+1)_subseq}
    An \emph{$(i,i+1)$-subsequence} of a word $x_1,\ldots,x_g$ is a nondecreasing subsequence, all of whose letters $x_{j_1}\le \cdots\le x_{j_m}$ are equal to $i$ or $i+1$.
\end{definition}

We next recall the definition of the \emph{reading word} of a tableau and the elementary Knuth transformations:

\begin{definition}\label{readingdef}
Let P be a SSYT. The reading word of $P,$ denoted
$\text{reading}(P),$ is the sequence of entries of $T$ obtained by concatenating the rows of $T$ from bottom to top.

\end{definition}

\begin{example}  The tableau $\psi(B) \in \text{SSYT}_\mathcal{AC}(6,3,10) $ from Example \ref{eg:add_column} has reading word $3,3,2,2,2,1,1,1,1,2,3.$

\vspace{.55cm}
    \centering
    \label{fig:bla}
\begin{ytableau}
\textcolor{blue}{1}& \textcolor{blue}{1}& \textcolor{blue}{1} & \textcolor{blue}{1} & \textcolor{blue}{2} & \textcolor{blue}{3}\\
 \textcolor{blue}{2} & \textcolor{blue}{2} & \textcolor{blue}{2}\\
 \textcolor{blue}{3} & \textcolor{blue}{3}
\end{ytableau}

\vspace{.3cm}
\end{example}

\begin{definition}
An \textit{elementary Knuth transformation} of a word applies one of the following transformations or their inverses to three consecutive letters in a word:
\[\cdots acb\cdots\rightarrow \cdots cab \cdots \hspace{.3cm }(a\leq b <c)\]
\[\cdots bac\cdots\rightarrow \cdots bca \cdots \hspace{.3cm }(a< b \leq c).\]
Two words $\omega$ and $\nu$ are \textit{Knuth equivalent} if one of them can be obtained from the other via a sequence of elementary Knuth transformations.
\end{definition}

\begin{theorem}\cite[\S 2.1]{fultonYT}
 Any word is Knuth equivalent to the reading word of
its insertion tableau. Words are Knuth equivalent if and only if their insertion tableaux coincide. 
\end{theorem}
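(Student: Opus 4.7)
The plan is to prove the two assertions in sequence, with the second reducing largely to the first. Write $w\sim_K w'$ to denote that $w$ and $w'$ are Knuth equivalent. The first assertion says $w\sim_K \text{reading}(P(w))$ for any word $w$, where $P(w)$ is the RSK insertion tableau. Given this, the ``only if'' half of the second assertion is immediate: if $P(w)=P(w')$ then
\[
w\;\sim_K\;\text{reading}(P(w))\;=\;\text{reading}(P(w'))\;\sim_K\;w'.
\]
The ``if'' half---that Knuth equivalent words have identical insertion tableaux---reduces to checking that each elementary Knuth transformation preserves $P$.

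For the first assertion, I would induct on the length $|w|$. The cases $|w|\le 1$ are immediate, since $P(w)$ has reading word equal to $w$. For the inductive step, write $w=w'\cdot x$; by induction $w'\sim_K \text{reading}(P(w'))$, so it suffices to prove the key lemma: for any SSYT $T$ and any letter $x$,
\[
\text{reading}(T)\cdot x \;\sim_K\; \text{reading}(T\leftarrow x),
\]
where $T\leftarrow x$ denotes the SSYT obtained by row-inserting $x$ into $T$. I would prove this by tracing the bumping path. If $x$ simply appends to the end of row $1$ without bumping, the two reading words are literally equal. Otherwise, $x$ bumps some entry $y$ from row $1$, which is then inserted into row $2$; one uses a sequence of elementary Knuth moves to ``slide'' $y$ across the row boundary to its new position. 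The crucial point is that the inequalities $a\le b<c$ and $a<b\le c$ in the two Knuth moves match exactly the weak-row / strict-column inequalities of an SSYT, so the required moves are always available. Iterating down the bumping path completes the lemma.

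For the ``if'' half of the second assertion, let $w\to w'$ be a single elementary Knuth transformation at positions $i,i+1,i+2$. Setting $T:=P(w_1\cdots w_{i-1})$, the question reduces to verifying
\[
T\leftarrow a\leftarrow c\leftarrow b \;=\; T\leftarrow c\leftarrow a\leftarrow b \quad (a\le b<c),
\]
and analogously $T\leftarrow b\leftarrow a\leftarrow c=T\leftarrow b\leftarrow c\leftarrow a$ when $a<b\le c$. Each identity is proved by case analysis on which of the three inserted letters initiate a bump in row $1$ of $T$, with the key observation that whenever bumping occurs, the entries bumped down into row $2$ again form a Knuth-related triple of the same shape, which lets the argument recurse row-by-row.

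The main obstacle is the combinatorial bookkeeping in both case analyses, especially the verification that elementary Knuth moves actually suffice to transport a bumped entry from one row to the next. The cleanest way to organize this is to isolate the two-row situation as a base case: once one verifies both lemmas when $T$ has at most two rows---a small finite check whose subcases are indexed by the possible configurations of the top row and the letters being inserted---the general claim follows from recursion along the bumping path, because at each step the relevant data are again a letter being inserted into an SSYT and, in the triple case, a fresh Knuth-related triple entering the next row.
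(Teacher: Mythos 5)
This statement is quoted from Fulton's \emph{Young Tableaux} (\S 2.1) and the paper gives no proof of its own, so there is nothing internal to compare against; your proposal should be judged against the classical argument, and it is essentially that argument. Your first half --- induction on $|w|$ via the key lemma $\text{reading}(T)\cdot x \sim_K \text{reading}(T\leftarrow x)$, proved by tracing the bumping path one row at a time with elementary moves --- is exactly Fulton's \S 2.1 existence proof. For the harder direction (Knuth equivalent implies equal insertion tableaux), you choose Knuth's original route: direct verification that each elementary transformation commutes with insertion, reduced to the triple-insertion identities $T\leftarrow a\leftarrow c\leftarrow b = T\leftarrow c\leftarrow a\leftarrow b$ and its companion. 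Fulton instead defers this to his Chapter 3 and derives uniqueness of the tableau word in each Knuth class from the increasing-subsequence invariants $L(w,k)$ (Greene's invariants), which are shown to be preserved by elementary moves and to determine a tableau word. Both routes are standard; yours is self-contained but bookkeeping-heavy, while Fulton's is slicker and would harmonize with this paper, which already invokes Greene's theorem (Theorem \ref{Greenestheorem}) for Proposition \ref{prop:calTtosequences}.

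Two points in your sketch need tightening, though neither is fatal. First, in the recursion for the triple-insertion identity, your claim that the bumped entries ``again form a Knuth-related triple of the same shape'' is too strong as stated: the two insertion orders may bump fewer than three letters (insertions that append to row 1 bump nothing), in which case one must check directly that the first rows agree and the bumped words are literally equal; and when three letters are bumped, the correct invariant is only that the bumped words are equal or related by an elementary Knuth transformation \emph{of either type} --- the relation type is not guaranteed to be preserved. The recursion closes just as well with this weaker statement, but it must be what you prove in the case analysis. Second, a small mislabeling: the implication $P(w)=P(w')\Rightarrow w\sim_K w'$ is the ``if'' direction of the biconditional, not the ``only if''; the substance of your reduction is unaffected.
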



\begin{lemma}\label{lemma:striptoreading}
Let $P$ be an SSYT of shape $\lambda$. Then, $P$ has an $(i,i+1)$-strip of length $\lambda_1$ if and only if $\text{reading}(P)$ has an $(i,i+1)$-subsequence of length $\lambda_1.$
\end{lemma}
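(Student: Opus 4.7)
The plan is to translate directly between strip boxes and subsequence positions, using how the reading-word order interacts with SSYT monotonicity. The forward direction is nearly immediate: given an $(i,i+1)$-strip $S$ of length $\lambda_1$ with boxes $b_1,\ldots,b_{\lambda_1}$ in columns $1,\ldots,\lambda_1$, the strip condition on the rows $b_\ell=(r_\ell,\ell)$ yields $r_1\ge r_2\ge\cdots\ge r_{\lambda_1}$. Since the reading word visits lower rows first and reads each row left-to-right, the cells $b_1,\ldots,b_{\lambda_1}$ appear in that order in $\text{reading}(P)$, and their entries form a sequence $i,\ldots,i,i+1,\ldots,i+1$ by the $(i,i+1)$-strip definition, giving the required $(i,i+1)$-subsequence.

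\smallskip

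For the reverse direction, let $b_1,\ldots,b_{\lambda_1}$ be the cells of $P$ in reading-word order whose entries $v_\ell\in\{i,i+1\}$ form an $(i,i+1)$-subsequence of length $\lambda_1$, and write $b_\ell=(r_\ell,c_\ell)$. Reading-word order immediately gives $r_1\ge r_2\ge\cdots\ge r_{\lambda_1}$. The plan is to deduce $c_1<c_2<\cdots<c_{\lambda_1}$; this will force the $b_\ell$ to occupy $\lambda_1$ distinct columns of $P$, all lying in $\{1,\ldots,\lambda_1\}$, hence exactly the set $\{1,\ldots,\lambda_1\}$. Together with the row inequality and the value ordering, the $b_\ell$ will then constitute an $(i,i+1)$-strip of length $\lambda_1$.

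\smallskip

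The central task is thus to verify $c_\ell<c_{\ell+1}$ for every consecutive pair. If $r_\ell=r_{\ell+1}$, this is forced by the left-to-right reading order within a row. Otherwise $r_\ell>r_{\ell+1}$, and I would rule out $c_\ell\ge c_{\ell+1}$ by examining the cell at position $(r_\ell,c_{\ell+1})$ in $P$: it lies strictly below the cell $(r_{\ell+1},c_{\ell+1})$ of entry $v_{\ell+1}$ in their common column, so by strict column-increase its entry exceeds $v_{\ell+1}$; yet it lies weakly to the left of the cell $(r_\ell,c_\ell)$ of entry $v_\ell$ in row $r_\ell$, so by weak row-increase its entry is at most $v_\ell$. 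This forces $v_{\ell+1}<v_\ell$, contradicting $v_\ell\le v_{\ell+1}$. The main obstacle is recognizing that this short contradiction handles all sub-cases uniformly---both $v_\ell=v_{\ell+1}$ and the mixed case $v_\ell=i<i+1=v_{\ell+1}$---using only basic SSYT monotonicity rather than any finer structural property of $(i,i+1)$-strips.
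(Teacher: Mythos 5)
Your proof is correct and follows essentially the same approach as the paper: both directions translate directly between strip boxes and reading-word positions using SSYT monotonicity. In fact your reverse direction is more careful than the paper's terse version --- your explicit contradiction at the cell $(r_\ell, c_{\ell+1})$ rigorously establishes that the subsequence cells have strictly increasing columns (and hence themselves form the strip), a point the paper only asserts implicitly when it claims the $i$'s lie ``to the left of and weakly below'' the $(i+1)$'s.
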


Recall that $\lambda_1$ is by definition the length of the largest part of $\lambda$.

\begin{proof}
    Take a tableau $P$ with an $(i,i+1)$-strip of length $\lambda_1.$ By the conditions in  Definition \ref{ii+1stripdef}, the entries in the strip will correspond directly to an $(i,i+1)$ sequence in $\text{reading}(P).$
    
    For the reverse direction, take a tableau $P$ such that $\text{reading}(P)$ has an $(i,i+1)$-subsequence of length $\lambda_1$, with $k$ entries $i$ followed by $\lambda_1-k$ entries $i+1.$ As the entries of this $(i,i+1)$-subsequence are weakly increasing, it follows from Definition \ref{readingdef} that the corresponding entries in $P$ will consist of $k$ entries $i$ which lie to the left of and weakly below $\lambda_1-k$ entries $(i+1).$ Since $T$ has width $\lambda_1,$ there must be an entry $i$ in the first $k$ columns of $P$ and an entry $i+1$ in the final $\lambda_1-k$ columns of $P,$ showing that $P$ indeed has an $(i,i+1)$-strip of length $\lambda_1.$  
\end{proof}

\begin{lemma}\label{lemma:striptosequence}
Let $\omega$ be a word. Let $P$ be the insertion tableau obtained from applying RSK to $\omega$, and let $\lambda$ be the shape of $P$.
Then, $P$ has an $(i,i+1)$-strip of length $\lambda_1$ if and only if $\omega$ has an $(i,i+1)$-subsequence of length $\lambda_1.$
\end{lemma}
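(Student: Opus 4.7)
My plan is to reduce the biconditional to Lemma \ref{lemma:striptoreading} together with the Knuth invariance of two natural statistics. Since $\omega$ and $\text{reading}(P)$ both have insertion tableau $P$, they are Knuth equivalent by the theorem cited just before. Lemma \ref{lemma:striptoreading} already gives that $P$ has an $(i,i+1)$-strip of length $\lambda_1$ if and only if $\text{reading}(P)$ has an $(i,i+1)$-subsequence of length $\lambda_1$. So the task reduces to showing that the property ``has an $(i,i+1)$-subsequence of length $\lambda_1$'' is invariant under Knuth equivalence. Since $\lambda_1$ is itself Knuth invariant (being the first row length of the common insertion tableau), it suffices to show that $L_{i,i+1}(\omega)$, the length of the longest $(i,i+1)$-subsequence of $\omega$, is a Knuth invariant.

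For this I would prove the auxiliary claim: if $\omega$ and $\omega'$ are Knuth equivalent, then the restricted words $\omega|_{\{i,i+1\}}$ and $\omega'|_{\{i,i+1\}}$ (obtained by deleting all letters outside $\{i,i+1\}$, preserving relative order) are also Knuth equivalent. By transitivity, it suffices to verify this for a single elementary Knuth move, by case analysis on how $\{a,b,c\}$ intersects $\{i,i+1\}$. If zero or one of $a,b,c$ lies in $\{i,i+1\}$, the restricted subword is visibly unchanged. The two-letter subcase $\{a,c\} \subseteq \{i,i+1\}$ with $b \notin \{i,i+1\}$ is ruled out by the constraint $a \le b < c$ (resp.\ $a < b \le c$), since no integer lies strictly between $i$ and $i+1$; in the remaining two-letter subcases ($\{a,b\}$ or $\{b,c\}$ inside $\{i,i+1\}$) only the letter outside is deleted, and the restricted subword fragment is the same two letters in the same order before and after. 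In the three-letter subcase, the Knuth constraint forces $(a,c,b) = (i,i+1,i)$ for the move $acb \to cab$, so the restricted subword transforms as $i,i+1,i \to i+1,i,i$, which is itself an elementary Knuth move; similarly $(b,a,c) = (i+1,i,i+1) \to (i+1,i+1,i)$ for $bac \to bca$.

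Given the claim, $L_{i,i+1}(\omega)$ equals the length of the longest weakly increasing subsequence of $\omega|_{\{i,i+1\}}$, which by Schensted's theorem is the first row length of its insertion tableau; this depends only on the Knuth class of $\omega|_{\{i,i+1\}}$, hence only on the Knuth class of $\omega$. Combining everything, $P$ has an $(i,i+1)$-strip of length $\lambda_1$ iff $\text{reading}(P)$ has an $(i,i+1)$-subsequence of length $\lambda_1$ iff $\omega$ does. The main obstacle is the finite case check for the auxiliary claim, but each subcase is routine.
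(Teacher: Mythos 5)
Your proposal is correct and follows essentially the same route as the paper: reduce via Lemma \ref{lemma:striptoreading} and the Knuth equivalence of $\omega$ with $\text{reading}(P)$, then verify Knuth invariance of the $(i,i+1)$-subsequence statistic by case analysis on elementary moves, where the only nontrivial cases are $i,i+1,i \to i+1,i,i$ and $i+1,i,i+1 \to i+1,i+1,i$. Your packaging via the restriction claim (that $\omega|_{\{i,i+1\}}$ and $\omega'|_{\{i,i+1\}}$ remain Knuth equivalent) plus Schensted's theorem is a slightly more systematic rendering of the paper's direct assertion that the property is preserved, but the computational content is identical.
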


\begin{proof}
We begin by noting that the property of having an $(i,i+1)$-subsequence of length $\ell$ is preserved by Knuth transformations. In fact, the only elementary Knuth transformations that affect the relative position of entries $(i)$ and $(i+1)$ are given by the following  and their inverses: \[\cdots (i)(i+1)(i)\cdots\rightarrow \cdots (i+1)(i)(i)\cdots\] \[\cdots(i+1)(i)(i+1)\cdots\rightarrow \cdots(i+1)(i+1)(i)\cdots.\] In both cases, although the relative order of the entries $(i)$ and $(i+1)$ changes, the property of having a $(i,i+1)$-subsequence of length $\ell$ is preserved. 

As stated in Lemma \ref{lemma:striptoreading}, $P$ has an $(i,i+1)$-strip of length $\lambda_1$ if and only if $\text{reading}(P)$ has an $(i,i+1)$-subsequence of length $\lambda_1.$ As $\text{reading}(P)$ is Knuth equivalent to $\omega,$ we conclude that $P$ has an $(i,i+1)$-strip if and only if $\omega$ has an $(i,i+1)$-subsequence of length $\lambda_1.$
\end{proof}

We next recall the statement of Greene's Theorem.

\begin{theorem}\cite[Theorem 4.8.10]{sagan}
\label{Greenestheorem}
Let $\omega$ be a word. Let $\lambda$ be the shape of the pair of tableaux obtained by applying RSK to $\omega$, and let $\lambda'$ be its conjugate.

Let $I_k(\omega)$ be the maximum number of elements in a collection of $k$ disjoint nondecreasing subsequences of $\omega$. Similarly, let $D_{k}(\omega)$ be the maximum number of elements in a union of $k$ disjoint decreasing subsequences of $\omega$.
Then, the following hold:
\begin{align*}
I_k(\omega)&=\lambda_1 + \cdots + \lambda_k,\\
D_{k}(\omega)&=\lambda'_1+\cdots \lambda'_{k}.
\end{align*}
\end{theorem}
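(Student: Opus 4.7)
The plan is to deduce Greene's theorem by reducing, via Knuth equivalence, to the case where $\omega$ is itself the reading word of a semi-standard tableau. Concretely, I would establish two intermediate results: (a) the statistics $I_k(\omega)$ and $D_k(\omega)$ depend only on the Knuth equivalence class of $\omega$; and (b) for any SSYT $P$ of shape $\lambda$, one has $I_k(\text{reading}(P)) = \lambda_1 + \cdots + \lambda_k$ and $D_k(\text{reading}(P)) = \lambda'_1 + \cdots + \lambda'_k$. Combining (a) and (b) with the already-cited fact that $\omega$ is Knuth equivalent to $\text{reading}(P)$, where $P$ is its insertion tableau, yields the theorem.

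For (a), it suffices to verify invariance under a single elementary Knuth transformation, say $\cdots a c b \cdots \to \cdots c a b \cdots$ with $a \le b < c$. Given a collection of $k$ disjoint nondecreasing subsequences $S_1, \ldots, S_k$ in the original word, one constructs a collection of the same total length in the modified word by a local re-routing at the three altered positions: a short case analysis enumerates the possible patterns of which of the three positions are used by which of the $S_j$, and shows that each such pattern admits a valid rearrangement that remains nondecreasing in the new order. The analogous analyses handle $D_k$, the second Knuth move, and their inverses. This case-by-case verification is the main technical obstacle.

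For (b), the lower bound on $I_k$ is immediate: the top $k$ rows of $P$, each read left-to-right, provide $k$ disjoint nondecreasing subsequences of $\text{reading}(P)$ with total length $\lambda_1 + \cdots + \lambda_k$. For the upper bound, the key observation is that no nondecreasing subsequence of $\text{reading}(P)$ can use two cells from the same column of $P$: columns strictly increase top-to-bottom, yet the reading word visits each column in the reverse order, so the entries of a single column appear in strictly decreasing order in $\text{reading}(P)$. Consequently, $k$ disjoint nondecreasing subsequences use at most $\min(k, \lambda'_j)$ cells from column $j$, and summing yields $\sum_j \min(k, \lambda'_j) = \lambda_1 + \cdots + \lambda_k$ by swapping the order of summation. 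The statement for $D_k$ follows by the symmetric argument with rows in place of columns, using that entries of a single row appear in weakly increasing order in the reading word.
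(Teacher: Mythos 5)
The paper does not actually prove this statement: it is imported wholesale from Sagan \cite[Theorem 4.8.10]{sagan}, so there is no internal argument to compare yours against. What you have written is a reconstruction of the standard proof of Greene's theorem (essentially Greene's original strategy, and the one in the cited source and in Fulton): show $I_k$ and $D_k$ are constant on Knuth classes, verify the formulas on reading words of tableaux, and invoke the fact, already quoted in the paper, that $\omega$ is Knuth equivalent to $\text{reading}(P)$ for $P$ its insertion tableau. Part (b) of your plan is complete and correct as stated: the entries of a single column of $P$ occur in strictly decreasing order in $\text{reading}(P)$ and those of a single row in weakly increasing order, so a nondecreasing subsequence meets each column at most once and a (strictly) decreasing subsequence meets each row at most once; the identities $\sum_j \min(k,\lambda'_j)=\lambda_1+\cdots+\lambda_k$ and $\sum_i \min(k,\lambda_i)=\lambda'_1+\cdots+\lambda'_k$ are the standard conjugation count, and the top $k$ rows (resp.\ leftmost $k$ columns) realize the bounds. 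You also correctly respect the weak/strict asymmetry, which matters here since words may have repeated letters.

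The one place your sketch underestimates the difficulty is (a). The rearrangement after an elementary Knuth move is \emph{not} in general a re-routing of each $S_j$ separately; it can force an exchange of material between two different members of the collection. Concretely, for $acb \to cab$ with $a\le b<c$: if $S_1$ passes through both $a$ and $c$ (so its suffix consists of letters $\ge c$) while $S_2$ passes through $b$, then in $cab$ the pair $a,c$ appears in the wrong order, and no modification of $S_1$ alone preserves the total count. The repair is a tail swap: take $S_1'$ to be the prefix of $S_1$, then $a$, then $b$, then the suffix of $S_2$ after $b$, and $S_2'$ to be the prefix of $S_2$, then $c$, then the suffix of $S_1$ after $c$; one checks both are nondecreasing (using $b<c$ and that the suffix of $S_1$ has values $\ge c > b$), disjoint, and of the same total size. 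Your phrasing (``which of the three positions are used by which of the $S_j$ \ldots admits a valid rearrangement'') is compatible with this, but a write-up must explicitly allow splicing between subsequences and must re-verify disjointness after the splice; a purely within-subsequence ``local re-routing'' claim is false as stated. With that understanding, the case analysis closes and your proof is the standard one from the literature the paper cites.
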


\begin{proposition}\label{prop:calTtosequences}

The pairs of tableaux in $\cT_{g,n,d}^r,$ as defined in Proposition \ref{prop:convert_to_pair_of_same_size}, are in bijection with $(r+1)$-ary words of length $g$ that have:

\begin{enumerate}
\item[(i)] a collection of at least $g-d+r$ disjoint decreasing subsequences of length $r+1$,
\item[(ii)] no nondecreasing subsequence of length greater than $\frac{d}{r}$, and
\item[(iii)] no $(i,i+1)$-subsequence of length $\frac{d}{r}.$ 
\end{enumerate}

\end{proposition}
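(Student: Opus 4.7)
The plan is to invoke the RSK correspondence, which bijects $(r+1)$-ary words of length $g$ with pairs $(P,Q)$ of the same shape $\lambda$ of size $g$, where $P$ is an SSYT with entries in $\{1,\ldots,r+1\}$ and $Q$ is an SYT. Under this bijection, pairs in $\cT^{r}_{g,n,d}$ correspond to the words for which $P$ (the insertion tableau) additionally satisfies the three constraints of Definition \ref{ACdef}: width at most $d/r$, at least $g+r-d$ columns of height $r+1$, and no $(i,i+1)$-strip of length $d/r$. The task is to match these three constraints with conditions (i), (ii), (iii) of the statement.

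For (ii), apply Greene's Theorem (Theorem \ref{Greenestheorem}) with $k=1$: the longest nondecreasing subsequence of $\omega$ has length $I_1(\omega)=\lambda_1$, so the width bound $\lambda_1\le d/r$ is equivalent to the absence of a nondecreasing subsequence of length exceeding $d/r$. For (i), apply Greene's Theorem to the conjugate shape with $k=g+r-d$ (when this is positive; otherwise both conditions are vacuous). Observe that since entries of $\omega$ lie in $\{1,\ldots,r+1\}$, every decreasing subsequence has length at most $r+1$, so $\lambda'_i\le r+1$ for each $i$. The equality $D_{g+r-d}(\omega)=\lambda'_1+\cdots+\lambda'_{g+r-d}$ of Greene's Theorem, together with this pointwise bound, shows that the existence of $g+r-d$ disjoint decreasing subsequences of length $r+1$ is equivalent to $\lambda'_1=\cdots=\lambda'_{g+r-d}=r+1$, i.e. to $\lambda$ having at least $g+r-d$ columns of height $r+1$.

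For (iii), the argument splits on whether $\lambda_1=d/r$. If $\lambda_1<d/r$ (assuming (ii) holds), then no $(i,i+1)$-strip of length $d/r$ can appear in $P$ (a strip of length $\ell$ requires the diagram to have at least $\ell$ columns), and no $(i,i+1)$-subsequence of length $d/r$ can appear in $\omega$ (since such a subsequence is nondecreasing and hence of length at most $I_1(\omega)=\lambda_1<d/r$), so both conditions are vacuously true. If $\lambda_1=d/r$, Lemma \ref{lemma:striptosequence} applies directly: $P$ has an $(i,i+1)$-strip of length $\lambda_1$ iff $\omega$ has an $(i,i+1)$-subsequence of length $\lambda_1$.

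The main obstacle is the correspondence in (i): one must leverage both the global identity of Greene's Theorem and the local bound $\lambda'_i\le r+1$ coming from the alphabet size to upgrade a subsequence-count inequality into the sharp column-height statement. Once these three translations are in place, combining them with Proposition \ref{prop:convert_to_pair_of_same_size} yields the bijection between $\cL^r_{g,n,d}$ and the stated set of words, and thus the proposition.
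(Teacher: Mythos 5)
Your proposal is correct and takes essentially the same route as the paper's own proof: the RSK correspondence plus Greene's Theorem (Theorem \ref{Greenestheorem}) translates the width and column-height constraints of Definition \ref{ACdef} into conditions (ii) and (i), and Lemma \ref{lemma:striptosequence}, with the same case split on whether $\lambda_1=\frac{d}{r}$, handles (iii). Your treatment of (i) --- combining Greene's identity $D_{g+r-d}(\omega)=\lambda'_1+\cdots+\lambda'_{g+r-d}$ with the alphabet bound $\lambda'_i\le r+1$ to force $\lambda'_1=\cdots=\lambda'_{g+r-d}=r+1$ --- makes explicit a step the paper leaves implicit, but it is the same underlying argument.
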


\begin{proof}
Consider a pair of tableaux $(P,Q)\in \cT_{g,n,d}^r$ of shape $\lambda\vdash g$ which result from applying RSK to an $(r+1)$-ary word $\omega,$ with $P$ the insertion tableau and $Q$ the recording tableau. 

From Definition \ref{ACdef}, the tableaux $P$ and $Q$ must have at least $g+r-d$ columns of height $(r+1)$ and width at most $\frac{d}{r}.$ Applying Theorem \ref{Greenestheorem}, we obtain that the word $\omega$ has at least $g-d+r$ disjoint decreasing subsequences of maximal length $r+1,$ and no nondecreasing subsequence of length greater than $\frac{d}{r}.$ 

In addition, Definition \ref{ACdef} requires that $P$ contain no $(i,i+1)$-strip of length $\frac{d}{r}.$ 
If the width of $P$ is less than $\frac{d}{r},$ then this condition is satisfied automatically, and the absence of a nondecreasing subsequence of length $\frac{d}{r}$ already implies the desired constraint on $(i,i+1)$-subsequences in $\omega.$ 
If the width of $P$ is exactly $\frac{d}{r},$ then Lemma \ref{lemma:striptoreading} shows that $\omega$ has no $(i,i+1)$-subsequence of length $\frac{d}{r}.$

Thus, every element of $\cT_{g,n,d}^r$ corresponds to a word $\omega$ satisfying the three listed condition. The reverse direction follows similarly from a direct application of Theorem \ref{Greenestheorem} and Lemma \ref{lemma:striptosequence}.
\end{proof}

Recall the sets $\cL_{g,n,d}^r$ from Definition \ref{def:calL} and $\cT_{g,n,d}^r$ from Proposition \ref{prop:convert_to_pair_of_same_size}. By Proposition \ref{prop:convert_to_pair_of_same_size}, there is a bijection between $\cL_{g,n,d}^r$ and $\cT_{g,n,d}^r.$

\begin{proof}[Proof of Theorem~\ref{thm:main_intro}]
From Theorem \ref{carlthm}, the count $\text{Tev}_{g,n,d}^{\mathbb{P}^r}$ is equal to the size of the set of $L$-tableaux $\cL_{g,n,d}^r.$. Then Proposition \ref{prop:convert_to_pair_of_same_size} gives a bijection between $\cL_{g,n,d}^r$ and $\cT_{g,n,d}^r,$ and Proposition \ref{prop:calTtosequences} further gives a bijection between $\cT_{g,n,d}^r$ and words satisfying the conditions of Theorem~\ref{thm:main_intro}.
\end{proof}

\section{Remarks on words}\label{sec:remarks}

We conclude with some remarks on the set of words enumerated by Theorem \ref{thm:main_intro}. As pointed out in \S\ref{sec:intro}, all three of the conditions (i)-(iii) are vacuous when $d\ge rg+r$, so we recover the fact that $\Tev^{\bP^r}_{g,n,d}=(r+1)^g$ in this range. The bound $d\ge rg+r$ is sharp: suppose instead that $d<rg+r$, in which case the requirement that $d|r$ forces $d\le rg$. Then, the sequences enumerated in Theorem \ref{thm:main_intro} do not comprise all words $x_1,\ldots,x_g$, as constant sequences always fail to satisfy (iii). 

If $d=rg$, then conditions (i) and (ii) are again vacuous, but condition (iii) disallows words which consist entirely of an $(i,i+1)$ subsequence. In total, there are
\begin{equation*}
    (g-1)r+(r+1)=n
\end{equation*}
such words, where the first term counts words that contain each of the letters $i,i+1$ at least once (for some $i\in\{1,\ldots,g-1\}$), and the second term counts constant sequences. It follows that $\Tev^{\bP^r}_{g,n,d=rg}=(r+1)^g-n$, see also \cite[Example 1.6]{tev_Pr}. The $n$ disallowed words correspond to stable maps (in the sense of Gromov-Witten theory) which contract a general curve to one of the $n$ points $x_i$ among $x_1,\ldots,x_n\in\bP^n$, and map rational tails to lines joining $x_i$ and the other $n-1$ points.

More generally, Theorem \ref{thm:main_intro} also makes manifest the fact that
 \begin{equation*}
     \Tev^{\bP^r}_{g,n,d}\le \Tev^{\bP^r}_{g,n+r+1,d+r},
 \end{equation*}
see also \cite[\S 5.2]{tev_Pr}. Indeed, a word satisfying (i)-(iii) still does so upon replacing $n,d$ with $n+r+1,d+r$. Geometrically, this corresponds roughly to the following phenomenon. Consider the maps $f:C\to\bP^r$ enumerated by $\Tev^{\bP^r}_{g,n+r+1,d+r}$, which satisfy $n+r+1$ conditions of the form $f(p_i)=x_i$. Suppose that the last $r+1$ of the points $p_1,\ldots,p_{n+r+1}\in C$ are moved from general into special position, becoming equal to a single point $p$. Then, the flat limits of the maps $f$ come in the following two types.
\begin{enumerate}
\item $\Tev^{\bP^r}_{g,n,d}$ \emph{quasimaps} $f':C\to\bP^r$ of degree $d+r$ with an order $r$ base-point at $p$, and satisfying $f'(p_i)=x_i$ for the remaining $n$ points. After twisting down the base-points, such $f'$ may be interpreted simply as maps of degree $d$.
\item $\Tev^{\bP^r}_{g,n+r+1,d+r}-\Tev^{\bP^r}_{g,n,d}\ge 0$ further degenerate contributions.
\end{enumerate}
In general, the degenerate contributions of the second type do not admit as simple of a geometric description.

When $r=1$, the description of the words enumerated in Theorem \ref{thm:main_intro} simplifies: (iii) implies (ii), and (i) implies them both. Indeed, a nonincreasing subsequence when $r=1$ is simply a $(1,2)$-subsequence, hence (iii) implies (ii). Now, if a binary word $\omega$ of length $g$ contained a $(1,2)$-subsequence $\omega'$ of length $d$, then a decreasing subsequence of $\omega$ of length 2 could contain at most one letter coming from $\omega'$. Thus, if (i) holds for $\omega$, then the total length of $\omega$ would be at least $d+(g+1-d)=g+1$, a contradiction.

Now, let $\omega$ is a binary word of length consisting of $a$ 1's and $b$ 2's, where $a+b=g$. Then, condition (i) is in turn equivalent to the following. First, we must have $a,b\ge g+1-d$. Then, reading from left to right, the number of 1's may never exceed the number of 2's by more than $d-1-b$. That is, if $\omega$ is interpreted as a lattice path, where 1's correspond to rightward steps and 2's correspond to upward steps, $\Tev^{\bP^1}_{g,n,d}$ counts the number of up-right paths to from $(0,0)$ to some point $(a,b)$, where $a,b$ are as above, not traveling below the line $y=x-(d-1-b)$.

In the case $d=\frac{g}{2}+1$, one must have $a=b=\frac{g}{2}$, and the familiar interpretation of the Catalan numbers in terms of lattice paths is recovered. Indeed, the fact that 
\begin{equation*}
    \Tev^{\bP^1}_{g,3,g/2+1}=C_{g/2}=\frac{1}{(g/2)+1}\binom{g}{g/2}
\end{equation*}
goes back to the 19th century calculation of Castelnuovo \cite{castelnuovo}.

\bibliographystyle{alpha} 
\bibliography{Tev_RSK_v3.bib}

\end{document}